
\documentclass[reqno,a4paper]{amsart}
\usepackage{amssymb}
\usepackage{amsmath}
\setlength{\topmargin}{0pt}
\setlength{\textwidth}{12.5cm}
\setlength{\parindent}{0.8cm}
\frenchspacing
\newcommand{\angles}[1]{\langle #1 \rangle}
\newcommand{\half}{\frac{1}{2}}

\newcommand{\norm}[1]{\left\Vert #1 \right\Vert}
\newcommand{\R}{\mathbb{R}}

\begin{document} 
\newtheorem{prop}{Proposition}[section]
\newtheorem{Def}{Definition}[section]
\newtheorem{theorem}{Theorem}[section]
\newtheorem{lemma}{Lemma}[section]
 \newtheorem{Cor}{Corollary}[section]

\title[LWP for Maxwell-Dirac]{\bf Local well-posedness for the Maxwell-Dirac system in temporal gauge}
\author[Hartmut Pecher]{
{\bf Hartmut Pecher}\\
Fakult\"at f\"ur  Mathematik und Naturwissenschaften\\
Bergische Universit\"at Wuppertal\\
Gau{\ss}str.  20\\
42119 Wuppertal\\
Germany\\
e-mail {\tt pecher@uni-wuppertal.de}}
\date{}

\begin{abstract}
We consider the low regularity well-posedness problem for the Maxwell-Dirac system in  n+1 dimensions in the cases $n=3$ and $n=2$ :
\begin{align*}
\partial^{\mu} F_{\mu \nu} & = - \langle \psi,\alpha_{\nu} \psi \rangle  \\
-i \alpha^{\mu} \partial_{\mu} \psi & = A_{\mu} \alpha^{\mu} \psi \, ,
\end{align*}
where $ F_{\mu \nu} = \partial^{\mu} A_{\nu} - \partial^{\nu} A_{\mu}$ , and
$\alpha^{\mu}$ are the Dirac matrices. We assume the temporal gauge $A_0=0$ and make use of the fact that some of the nonlinearities fulfill a null condition. Because we work in the temporal gauge we also apply  a method, which was used by Tao for the Yang-Mills system in this gauge. 
\end{abstract}
\maketitle
\renewcommand{\thefootnote}{\fnsymbol{footnote}}
\footnotetext{\hspace{-1.5em}{\it 2020 Mathematics Subject Classification:} 
35Q61, 35L70 \\
{\it Key words and phrases:} Maxwell-Dirac,
local well-posedness, temporal gauge}
\normalsize 
\setcounter{section}{0}

\section{Introduction and the main theorem}
The Maxwell–Dirac system describes the interaction of an electron
with its self-induced electromagnetic field. 
In Minkowski space $\R^{1+n}= \R_t \times \R^n_x$ , where we consider the cases $n=3$ and $n=2$ , it is given by
\begin{align}
\label{0.1}
\partial^{\mu} F_{\mu \nu} & = - \langle \psi,\alpha_{\nu} \psi \rangle  \\
\label{0.2}
-i \alpha^{\mu} \partial_{\mu} \psi & = A_{\mu} \alpha^{\mu} \psi \, ,
\end{align}
where 
$$ F_{\mu \nu} = \partial^{\mu} A_{\nu} - \partial^{\nu} A_{\mu} \, .$$ 
Greek indices run over $0 \le \mu \le n$ and Latin indices over $1 \le i \le n$ 
and the usual summation convention is used with the metric $diag(-1, 1, 1,1)$ and $diag(-1,1,1)$ on  $\R^{1+3}$ and $\R^{1+2}$, respectively.
In the case $n=3$ 
 the (4x4)  Dirac matrices $\alpha^{\mu}$ are 
$  \alpha^0 = \left( \begin{array}{cc}
I & 0  \\ 
0 & I  \end{array} \right)\, \,$ 
 , $\, \,  \alpha^j = \left( \begin{array}{cc}
0 & \sigma^j  \\
\sigma^j & 0  \end{array} \right) \, \, $, where $\sigma^j$ are the Pauli matrices $\, \, \sigma^1 = \left( \begin{array}{cc}
0 & 1  \\
1 & 0  \end{array} \right)$ ,
$ \sigma^2 = \left( \begin{array}{cc}
0 & -i  \\
i & 0  \end{array} \right)$ ,
$ \sigma^3 = \left( \begin{array}{cc}
1 & 0  \\
0 & -1  \end{array} \right)$ . \\
In the case $n=2$ we define $\alpha^0 = I$ and $\alpha^i = \sigma^i$ for $i=1,2$ .

$\alpha^{\mu}$ are hermitian matrices with $(\alpha^{\mu})^2 = I$ , $\alpha^j \alpha^k + \alpha^k \alpha^j = 0$ for $j \neq k$ .
The functions $A_{\mu} : \R^{1+n} \to \R$ are the potentials,  $\psi: \R^{1+n} \to \mathbb{C}^N$  is the spinor and $\langle \cdot,\cdot \rangle $ is the $\mathbb{C}^N$ inner product, where $N=4$ for $n=3$ and $N=2$ for $n=2$ .

We omit a mass term $m \psi$ in the Dirac equation just for convenience.

Our aim is to obtain a well-posedness result for the Cauchy problem under minimal regularity assumptions on the data.

 As is well-known
the system (\ref{0.1}),(\ref{0.2}) is invariant under the gauge transformations
$$A_{\mu} \to A'_{\mu} = A_{\mu} + \partial_{\mu} \chi \, , \, \psi \to \psi' = e^{i \chi} \psi \, , \, D_{\mu} \to D'_{\mu} = \partial_{\mu} - i A'_{\mu} \, .$$
This allows to impose an additional gauge condition.

From now on we assume the temporal gauge $A_0 =0$ .

In this case equation (\ref{0.1}) reduces to
\begin{align}
\label{2'}
\partial_t\,  div \, A & = - \langle \psi,\psi \rangle \, , \\
\label{1'}
\square A_j - \partial^j (div\, A) & = - \langle \psi,\alpha_j \psi \rangle \, .
\end{align}

We apply the well-known Hodge decomposition $ A = A^{df} + A^{cf}$ , where the divergence-free part is given by $PA:= A^{df}: = |\nabla|^{-2} \nabla \times (\nabla \times A)$ for $n=3$ and  $PA:= A^{df}: = (R_2(R_1 A_2-R_2 A_1),-R_1(R_1 A_2 - R_2 A_1))$ for $n=2$ , and the curl-free part by $(1-P)A := A^{cf} :=  -|\nabla|^{-2} \nabla(div \, A)$ \, .

If we apply the projection $P$ of $A$ onto the divergence-free part $A^{df}$ we obtain from (\ref{1'}) :
\begin{equation}
\label{1''}
\square A^{df} = - P \left(\begin{array}{c} \langle \psi,\alpha_1\psi \rangle \\ \langle \psi,\alpha_2 \psi \rangle\\ \langle \psi,\alpha_3\psi \rangle \end{array} \right) 
\end{equation}
for $n=3$ and
\begin{equation}
\label{1'''}
\square A^{df} = - P \left(\begin{array}{c} \langle \psi,\alpha_1\psi \rangle \\ \langle \psi,\alpha_2 \psi \rangle \end{array} \right) 
\end{equation}
for $n=2$ . \\
By the definition of $A^{cf}$ the equation (\ref{2'}) can be rewritten as
\begin{equation}
\label{2*}
\partial_t A^{cf} = -|\nabla|^{-2} \nabla \langle \psi, \psi \rangle \, . \end{equation} 
We reformulate (\ref{1''}) as a first order system
\begin{equation}
\label{1}
(-i \partial_t \pm \langle \nabla \rangle ) A^{df}_{\pm} = \mp 2^{-1} \langle \nabla \rangle^{-1} P \left(\begin{array}{c} \langle \psi,\alpha_1\psi \rangle \\ \langle \psi,\alpha_2 \psi \rangle\\ \langle \psi,\alpha_3\psi \rangle \end{array} \right) - A_j \, ,
\end{equation}
where we define
$A^{df}_{\pm} = \half(A^{df}\pm (i \langle \nabla \rangle)^{-1}(\partial_t A^{df})) \, , $
so that $A^{df} = A_{+}^{df} + A_{-}^{df}$ , $\partial_t A^{df} = i \langle \nabla \rangle (A_{+}^{df} - A_{-}^{df})$ . Similarly (\ref{1'''}) is rewritten.

Following \cite{AFS1} and \cite{HO} in order to  rewrite the Dirac equation we define the projections
$$\Pi(\xi) := \half(I + \frac{\xi_j \alpha^j}{|\xi|})$$  and $\Pi_{\pm}(\xi):= \Pi(\pm \xi)$ , so that $\Pi_{\pm}(\xi)^2 = \Pi_{\pm}(\xi)$ , $\Pi_+(\xi) \Pi_-(\xi) =0 $ , $\Pi_+ (\xi) + \Pi_-(\xi) = I$ , $\Pi_{\pm}(\xi) = \Pi_{\mp}(-\xi) $ .
We obtain 
\begin{equation}
\label{2.6}
\alpha^j \Pi(\xi) = \Pi(- \xi) \alpha^j + \frac{\xi_j}{|\xi|} I_{4x4} \, .
\end{equation}
Using the notation $\Pi_{\pm} = \Pi_{\pm}(\frac{\nabla}{i})$ we obtain
\begin{equation}
\label{2.8}
 -i\alpha^j \partial_j = |\nabla|\Pi_+ - |\nabla|\Pi_- \, , 
\end{equation}
where $|\nabla|$ has symbol $|\xi| $ . Moreover defining the modified Riesz transform by $R^j_{\pm} = \mp(\frac{\partial_j}{i|\nabla|}) $ with symbol $ \mp \frac{\xi_j}{|\xi|}$ the identity (\ref{2.6}) implies
\begin{equation}
\label{2.7}
\alpha^j \Pi_{\pm} = (\alpha^j \Pi_{\pm})\Pi_{\pm} = \Pi_{\mp} \alpha^j \Pi_{\pm}- R^j_{\pm} \Pi_{\pm}  \, .
\end{equation}
If we define $\psi_{\pm} = \Pi_{\pm} \psi$ we obtain by applying the projection $\Pi_{\pm}$ and (\ref{2.8}) the Dirac type equation in the form
\begin{equation}
\label{3}
(i \partial_t \pm |\nabla|)\psi_{\pm} = \Pi_{\pm}(A_j^{df} \alpha^j   \psi + A^{cf}_j \alpha^j \psi)  \, .
\end{equation}

We want to solve the Cauchy problem for the system (\ref{1}),(\ref{2*}) and (\ref{3}) .    This system is certainly equivalent to the original system (\ref{0.1}),(\ref{0.2}) in temporal gauge $A_0=0$ .

We consider initial data for $A_j$ and $\psi$ at $t=0$:
\begin{equation}\label{Data}
A_j(0) = a_j , \quad (\partial_t A_j)(0) =  b_j,
         \quad \psi(0) = \psi_0.
 \end{equation}
 Equation (\ref{2'}) requires  the compatability condition
\begin{equation}
\label{CC}
\partial^j b_j = -|\psi_0|^2 \, ,
\end{equation}
which we assume from now on.

For our further considerations it is also important that the gauge invariance allows to assume 
\begin{equation}
\label{1.14}
A^{cf}(0) = A^{cf}(0,x) = 0 \, .
\end{equation}
One only has to choose 
\begin{equation}
\label{GT}
 \chi(x) = |\nabla|^{-2} div\, A(0,x) \, .
 \end{equation}
This implies 
$ A_0' = A_0 =0$ and $$A'^{cf}(0)= A^{cf}(0) + \nabla \chi= -|\nabla|^{-2} \nabla\, div\, A(0) +  |\nabla|^{-2} \nabla \,div\, A(0) = 0 \, . $$
These data transform as follows:
\begin{align}
\nonumber
 a_{j,\pm} = A_{j,\pm}(0) = \half(a_j \pm (i \langle \nabla \rangle )^{-1} b_j)\, , \quad & \quad
  \psi_{\pm}^0 =\psi_{\pm}(0)  = \Pi_{\pm} \psi_0 \, . 
\end{align}

Our main theorem reads as follows:
\begin{theorem}
\label{Theorem1.1}
Assume that $s$ , $r$ and $l$ satisfy the following conditions in the case $n=3$:
\begin{align*}
&s > \frac{1}{4} \, ,  \quad r >\frac{5}{8} \, , \quad l > 1 \\
& r \ge s \ge r-1 \, , \, l \ge s \ge l-1 \, , \, \\
&2s-r > -\frac{1}{8}\, ,  \,  3s-2r > -1 \, , \, 2r-s > 1  \, , \, s-l > -\frac{3}{4} \, . 
\end{align*}
In the case $n=2$ we assume 
\begin{align*}
&s > 0 \, ,  \quad r >\frac{1}{4} \, , \quad l > \half \\
& r \ge s \ge r-1 \, , \, l \ge s \ge l-1 \, , \, \\
&2s-r > -\frac{3}{4}\, ,  \,  3s-2r > -\frac{3}{2} \, , \, 4s-r > -\half \, , \,  2r-s > \half \, , \, s-l \ge -\frac{1}{2} \, . 
\end{align*}
Given initial data $a_j = a_j^{df} + a_j^{cf}$ with $a_j^{df} \in H^r$ , $a_j^{cf} = 0$ , and $b_j \in H^{r-1}$ , $\psi_0 \in H^s$, which fulfill (\ref{CC}) ,  there exists a time $T > 0$ , depending on the norms of the data, such that the Cauchy problem  (\ref{Data}) for  the Maxwell-Dirac system (\ref{0.1}),(\ref{0.2}) in temporal gauge $A_0=0$ has a unique solution $$A_j^{df} \in  X^{r,b}_{+}[0,T]+X^{r,b}_{-}[0,T] \, , \, \psi \in X^{s,\frac{1}{2}+}_+[0,T] + X^{s,\frac{1}{2}+}_-[0,T]    \, , $$
and
$$ A_j^{cf}  \in X^{l,\half+}_{\tau=0}[0,T] \,\, \text{for}\,\, n=3 \, , \,  |\nabla|^{\epsilon} A_j^{cf}  \in X^{l-\epsilon,\half+}_{\tau=0}[0,T] \,\, \text{for} \,\, n=2 \, ,$$  where $\epsilon > 0$ is sufficiently small, $b=\frac{7}{8}+$ for $n=3$ and $b=\half+$ for $n=2$ (these spaces are defined in Def. \ref{Def.1.2}). It has the regularity
$$ A^{df} \in C^0([0,T],H^r) \cap C^1([0,T],H^{r-1}) \, , \, \,  
 \psi \in C^0([0,T],H^s) \, , $$
$$ A^{cf} \in C^0([0,T],H^l) \, \, \text{for} \,\, n=3 \, , \, |\nabla|^{\epsilon} A^{cf} \in C^0([0,T],H^{l-\epsilon}) \,\, \text{for} \,\, n=2 \, . $$
The solution depends continously on the data and higher regularity persists.
\end{theorem}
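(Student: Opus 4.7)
The plan is to set up a contraction-mapping argument for the Duhamel form of the coupled system (\ref{1}), (\ref{2*}), (\ref{3}) in a product of Bourgain-type spaces: $X^{r,b}_\pm[0,T]$ for $A^{df}_\pm$, $X^{s,\half+}_\pm[0,T]$ for $\psi_\pm$, and $X^{l,\half+}_{\tau=0}[0,T]$ for $A^{cf}$, together with the $|\nabla|^\epsilon$ correction on the latter in two space dimensions. The linear energy inequalities and the standard reduction to multilinear bounds on the right-hand sides are classical in this framework, so the whole argument reduces to proving appropriate bilinear and trilinear estimates in the dual Bourgain spaces. Once these are in place on a sufficiently short $[0,T]$, standard Picard-iteration theory delivers existence, uniqueness in the iteration class, continuous dependence on the data, and persistence of higher regularity; the claimed $C^0_t$ regularity in $H^r$, $H^s$ and $H^l$ then follows at once from the embedding $X^{\sigma,b}\hookrightarrow C^0_tH^\sigma$ valid for $b>\half$.

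I would first treat the Maxwell source $-\half\langle\nabla\rangle^{-1}P\langle\psi,\alpha^j\psi\rangle$ on the right-hand side of (\ref{1}). Decomposing $\psi=\psi_++\psi_-$ and commuting $\alpha^j$ past the Dirac projections via (\ref{2.7}) writes this source as a sum of pieces of the schematic form $P\langle\Pi_{\pm_1}\varphi,\alpha^j\Pi_{\pm_2}\chi\rangle$; the joint action of the Hodge projection $P$ and the angular symbols in $\Pi_{\pm_1}$, $\Pi_{\pm_2}$ produces the null structure that cancels the dangerous high$\times$high parallel interactions and permits low values of $r$. The resulting bilinear bounds in $X^{r-1,b-1}_\pm$ are exactly what forces the scaling inequalities $r>5/8$, $2s-r>-\tfrac18$, $3s-2r>-1$, $2r-s>1$ in three dimensions and their two-dimensional analogues; the unusual choice $b=\tfrac78+$ for $n=3$ is what is needed for these null-form estimates to close. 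The transport equation (\ref{2*}) for $A^{cf}$ only demands the elementary bound on $|\nabla|^{-2}\nabla\langle\psi,\psi\rangle$ in $X^{l,-\half+}_{\tau=0}$, which, combined with the indices forced by the other two equations, is responsible for the constraint $l>1$ and for the $|\nabla|^\epsilon$ correction needed in two dimensions, where $|\nabla|^{-2}\nabla$ is more singular at low frequencies.

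The Dirac right-hand side $\Pi_\pm(A_j^{df}\alpha^j\psi)+\Pi_\pm(A_j^{cf}\alpha^j\psi)$ splits into two qualitatively different pieces. For the divergence-free piece I would again invoke (\ref{2.7}) to expose the null structure between the Dirac projections and the transverse field $A^{df}$, reducing matters to the known wave--Dirac bilinear estimates and thereby pinning down the compatibility conditions $r\ge s\ge r-1$ together with the Dirac Bourgain index $\half+$. The curl-free piece is the chief obstacle, since $A^{cf}$ satisfies the non-dispersive equation (\ref{2*}) and its $X^{l,\half+}_{\tau=0}$ norm carries no light-cone cancellation that could pair with the Dirac characteristic set. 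Following Tao's treatment of temporal-gauge Yang--Mills, I would refuse to estimate $A_j^{cf}\alpha^j\psi$ as an unstructured bilinear product and instead substitute the primitive $A_j^{cf}(t)=-\int_0^t|\nabla|^{-2}\partial_j\langle\psi,\psi\rangle\,ds$ supplied by (\ref{2*}) together with the gauge normalisation (\ref{1.14}), then manipulate the resulting trilinear spinor expression through integration by parts in time and renewed use of (\ref{2.7}) so that the quadratic spinor piece acquires a usable null factor. This step is the one I expect to be the main difficulty, and it is what dictates the couplings $l\ge s\ge l-1$ and $s-l>-\tfrac34$ (respectively $s-l\ge -\half$), as well as the extra constraint $4s-r>-\half$ that becomes binding when $n=2$.
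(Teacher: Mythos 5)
Your overall framework (Duhamel formulation, Picard iteration in $X^{r,b}_{\pm}\times X^{s,\half+}_{\pm}\times X^{l,\half+}_{\tau=0}$, reduction to the bilinear estimates of Proposition \ref{Prop.1}, the embedding $X^{\sigma,b}\hookrightarrow C^0_tH^{\sigma}$ for $b>\half$) coincides with the paper's Section 2, and your treatment of the Maxwell source (\ref{4.1}) and of the $A^{df}_j\alpha^j\psi$ term via (\ref{2.7}) and the divergence-free identity is essentially the paper's. The genuine divergence is in the term you correctly identify as the chief obstacle, $\Pi_{\pm}(A^{cf}_j\alpha^j\psi)$. You propose to substitute the primitive $A^{cf}_j(t)=-\int_0^t|\nabla|^{-2}\partial_j\langle\psi,\psi\rangle\,ds$, integrate by parts in time, and extract a null factor from the resulting trilinear spinor expression. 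That is not Tao's method and not what the paper does: the paper proves (\ref{4.3}) as a genuine \emph{bilinear} estimate, explicitly noting that the matrices $\alpha^j$ (hence any null structure) are irrelevant there. What substitutes for null structure is the elliptic localization $\langle\tau_1\rangle^{\half+}$ built into $X^{l,\half+}_{\tau=0}$: after splitting into the cases $|\xi_1|\gtrless|\xi_2|$, the high-frequency-$A^{cf}$ case is handled by the $L^{\infty}_xL^2_t$ Strichartz bound (\ref{3.2}), and the high-frequency-$\psi$ case by Tao's averaging principle plus Schur's test over the foliation $\tau_2=T+O(1)$, which reduces everything to the fixed-time convolution bound $\sup_T\|\chi_{|\xi|=T+O(1)}\ast\langle\xi\rangle^{-2l}\|_{L^{\infty}}^{1/2}\lesssim 1$, valid precisely for $l>1$ (resp.\ $l>\half$ for $n=2$). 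Your trilinear route is left entirely unexecuted, and it is far from clear it closes at $s=\frac14+$: the integration by parts in time produces boundary terms and a term where $\partial_t$ falls on $\psi$, and the quadratic expression $|\nabla|^{-2}\partial_j\langle\psi,\psi\rangle$ carries no angular gain, so you would still face an unstructured low-frequency interaction. Since this is the step that makes the temporal gauge tractable at all, the proposal as written has a real gap here.

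A secondary but telling issue is that several constraints are attributed to the wrong estimates, which suggests the numerology has not actually been traced. In the paper $l>1$ comes from (\ref{4.3}) (the $L^{\infty}_xL^2_t$ bound and the convolution estimate above), not from the source term of (\ref{2*}); the latter, estimate (\ref{4.2}), is instead responsible for $s-l>-\frac34$ (via (\ref{3.3}) with $p=4$). Likewise $4s-r>-\half$ for $n=2$ arises in the duality argument (\ref{1.b}) inside the proof of (\ref{4.1}), not from the $A^{cf}\psi$ interaction, and $l\ge s\ge l-1$ is a compatibility condition for the product estimates rather than an output of your proposed trilinear analysis. To repair the proposal you should replace the primitive-substitution step by the direct bilinear argument: exploit that $\widetilde{A^{cf}}$ is concentrated near $\tau=0$ while $\widetilde{\psi}$ is concentrated near the light cone, apply the averaging principle of \cite{T1} to reduce the weights to characteristic functions, and run Schur's test in $T$.
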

\noindent{\bf Remarks:} 
1. In the case $n=2$  we replace $A^{cf}_j$ by $|\nabla|^{\epsilon} A^{cf}_j$ in order to avoid the operator $|\nabla|^{-1}$ in equation (\ref{2*}) and replace it by $|\nabla|^{-1+\epsilon}$ , which is less singular   . \\
2. For $n=3$ the choice $s=\frac{1}{4}+\delta$ , $r=\frac{5}{8} + \delta$ , $l = 1+\delta$ for arbitrary $\delta > 0$ obviously fulfills our assumptions , and for $n=2$ the choice $s=\delta$ , $r=\frac{1}{4}+\delta$ , $l = \half + \delta$ .\\[0.3em]

We prove local well-posedness by iterating in  $X^{s,b}$-spaces adapted to the  operators $i\partial_t \pm \langle \nabla \rangle $ and $\partial_t$ . 
\begin{Def}
\label{Def.1.2} 
$X^{s,b}_{\pm}$ is the completion of $\mathcal S(\R^{1+n})$ with respect to the norm
$$
  \norm{u}_{X^{s,b}_\pm} = \| \angles{\xi}^s \langle -\tau \pm |\xi| \rangle^b \widetilde u(\tau,\xi) \|_{L^2_{\tau,\xi}},
$$
where $\widetilde u(\tau,\xi) = \mathcal F_{t,x} u(\tau,\xi)$ is the space-time Fourier transform of $u(t,x)$.

 Let $X^{s,b}_\pm [0,T]$ denote the restriction space to the interval $[0,T]$ for $T>0$.
In addition to $X^{s,b}_\pm$, we shall also need the wave-Sobolev spaces $X^{s,b}_{|\tau|=|\xi|}$, defined to be the completion of $\mathcal S(\R^{1+n})$ with respect to the norm
$$
  \norm{u}_{X^{s,b}_{|\tau|=|\xi|}} = \norm{\angles{\xi}^s \langle |\tau|-  |\xi| \rangle^b \widetilde u(\tau,\xi)}_{L^2_{\tau,\xi}}.
$$ 
and $X^{s,b}_{\tau = 0}$ is the completion of  $\mathcal S(\R^{1+3})$ with respect to the norm
$$\|u\|_{X^{s,b}_{\tau=0}} = \| \langle \xi \rangle^s \langle \tau \rangle^b  \widetilde u(\tau,\xi)\|_{L^2_{\tau,\xi}} $$
and $X^{s,b}_{\tau=0}[0,T]$ its restriction to $[0,T]$ .
\end{Def}
We recall the fact that
 $$
 X^{s,b}_\pm [0,T] \,,\, X^{s,b}_{\tau=0}[0,T]  \hookrightarrow C^0([-T,T];H^s) \quad \text{for} \ b > \half.
$$

Let us make some historical remarks. As is well-known we may impose a gauge condition. We exlusively study the temporal gauge $A_0 =0 $ . Other convenient gauges are the Coulomb gauge $\partial^j A_j=0$ and the Lorenz gauge $\partial^{\alpha}A_{\alpha} =0$. It is well-known that for the low regularity well-posedness problem for the Maxwell-Klein-Gordon equation (and the Yang-Mills equation) a null structure for the nonlinear terms plays a crucial role. This was first detected by Klainerman and Machedon \cite{KM}, who proved global well-posedness in the case of three  space dimensions in Coulomb gauge for finite energy data. These null condition also plays a decisive role for many other systems of mathematical physics. In this paper we are interested in the Maxwell-Dirac system in three and two space dimensions. In three space dimensions Bournaveas \cite{B} considered this system in Coulomb gauge and proved local well-posedness for $A_j(0) \in H^2$, $(\partial_t A_j)(0) \in H^1$ and $\psi(0) \in H^1$. Moreover in Lorenz gauge he only assumed  $A_{\mu}(0) \in H^{1+\epsilon}$, $(\partial_t A_{\mu})(0) \in H^{\epsilon}$ and $\psi(0) \in H^{\half+\epsilon}$ with $\epsilon > 0$ . This result was improved by Masmoudi-Nakanishi \cite{MN1} in Coulomb gauge who proved local well-posedness for finite energy data in the classical solution space $A \in C^0([0,T],\dot{H}^1) \cap C^1([0,T],L^2)$, $ \psi \in C^0([0,T],L^2)$ . These authors also studied the nonrelativistic limit of this system as well as the Maxwell-Klein-Gordon system as $c \to \infty$ (cf. \cite{MN}).
An almost optimal local well-posedness result in Lorenz gauge , namely for data $\psi(0) \in H^s$  and $F_{\mu \nu}(0) \in H^{s-\half}$ for $s>0$ was obtained  by d'Ancona, Foschi and Selberg \cite{AFS1} who detected a new null structure of the system as a whole.  All these results are given in 3+1 dimensions.

In 2+1 dimensions the fundamental global well-posedness result for data $\psi(0) \in L^2$ and (essentially) $F_{\mu \nu}(0) \in H^{\half}$ was proven by d'Ancona and Selberg \cite{AS}.

An ill-posedness result in dimensions $n \le 3$ for the case $s < 0$ was recently obtained by Selberg and Tesfahun \cite{ST1}, which means that the results in \cite{AS} and \cite{AFS1} are optimal and almost optimal in 2+1 and 3+1 dimensions, respectively.

Important for the present paper are the methods developed by Tao \cite{T} for a small data local well-posedness result for the Yang-Mills equations. We also rely on the methods used by Huh-Oh \cite{HO} for the Chern-Simons-Dirac equation.

In the present paper we consider the coupled Maxwell-Dirac equation in temporal gauge for the space dimension $n=3$ . To the best of our knowledge  we obtain the first local well-posedness result for the temporal gauge. Our aim is to minimize the regularity of the Cauchy data. If we assume $A^{df}(0) \in H^r$ , $A^{cf}(0)=0$ (this condition which may be assumed by gauge invariance) , and $\psi(0) \in H^s$ , we obtain a solution of the Cauchy problem by a Picard iteration with $ A^{df} \in C^0([0,T],H^r) \cap C^1([0,T],H^{r-1})  \, , \,  \psi \in C^0([0,T],H^s) \, , $ and $ A^{cf} \in C^0([0,T],H^l) $ for $n=3$ , $ |\nabla|^{\epsilon} A^{cf} \in C^0([0,T],H^{l-\epsilon}) $ for $n=2$ , where $\epsilon >0$ is suffciently small,
provided $s > \frac{1}{4}$ , $r > \frac{5}{8}$ , $l > 1$ 
for $n=3$ , and $s>0$ , $r > \frac{1}{4}$ , $l> \half$ for $n=2$ . Here $A^{df}$ and $A^{cf}$ denote the divergence-free part and the curl-free part respectively.
We remark, that this is almost optimal with respect to the data for the spinor for $n=2$ , if we compare it with the ill-posedness \cite{ST1} mentioned before. Uniqueness holds in spaces of $X^{s,b}$-type, which are the spaces where the fixed point argument works. The null conditions detected by Klainerman, Machedon, Selberg, Huh, Oh and others are of course fundamental for the necessary bilinear estimates. As mentioned before Tao`s methods for the Yang-Mills equation are fundamental as well as the convenient  atlas of bilinear estimates in wave-Sobolev spaces by \cite{AFS0}, \cite{AFS} and \cite{ST}.

\section{Reduction to multilinear estimates}
  
It is well known that the linear initial value problem
 $$
  (i\partial_t \pm \langle \nabla \rangle ) u = G  \in X^{s,b-1+\delta}_\pm [0,T], \qquad u(0) = u_0\in H^s,
$$
for any $s\in \R, \ b > \frac12$,  $\ 0<\delta \ll 1$,  
has a unique solution
satisfying
 \begin{equation}\label{LinearEst}
\norm{u}_{X^{s,b}_\pm [0,T]} \le C \left( \norm{u_0}_{H^s} + T^{\delta} \norm{G}_{X^{s,b-1+\delta}_\pm [0,T]} \right)
\end{equation}  
for $0<T<1$.

A similar result holds for the equation $\partial_t u = G$ and the space $X^{s,b}_{\tau=0}$ .\\[0.5em]

This implies that in order to prove Theorem \ref{Theorem1.1} by an iteration argument it suffices to prove the following bilinear estimates:

\begin{prop}
\label{Prop.1}
Assume $n=3$ . Let the assumptions on  $s$,$r$ and $l$ in Theorem \ref{Theorem1.1} be fulfilled. Let $ b = \frac{7}{8}+$ . Then the following estimates apply:
\begin{align}
\label{4.1}
\| P \langle \psi_{\pm_1} , \alpha^j \psi_{\pm_2} \rangle \|_{X^{r-1,b-1+}_{\pm}} & \lesssim \|\psi_{\pm_1}\|_{X^{s,\half+}_{\pm_1}} \|\psi_{\pm_2}\|_{X^{s,\half+}_{\pm_2}} \, , \\
\label{4.2}
\| |\nabla|^{-1} \langle \psi_{\pm_1} , \psi_{\pm_2} \rangle \|_{X^{l,-\half++}_{\tau=0}} & \lesssim \|\psi_{\pm_1}\|_{X^{s,\half+}_{\pm_1}} \|\psi_{\pm_2}\|_{X^{s,\half+}_{\pm_2}} \, , \\
\label{4.3}
\| \Pi_{\pm} (A^{cf}_j \alpha^j \psi) \|_{X^{s,-\half++}_{|\tau|=|\xi|}} & \lesssim \|A^{cf}\|_{X^{l,\half+}_{\tau=0}} \|\psi\|_{X^{s,\half+}_{|\tau|=|\xi|}} \, , \\
\label{4.4}
\| \Pi_{\pm} (A^{df}_{j,\pm_1} \alpha^j \psi_{\pm_2}) \|_{X^{s,-\half++}_{\pm}} & \lesssim (\|A^{df}_{j\pm_1}\|_{X^{r,b}_{\pm_1}} + \|A^{cf}_j\|_{X^{l,\half+}_{\tau =0}}) \|\psi_{\pm_2}\|_{X^{s,\half+}_{\pm_2}}\, , 
\end{align}
where $\pm$ , $\pm_1$ and $\pm_2$ denote independent signs.
\end{prop}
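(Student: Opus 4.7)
The plan is to exploit the various null structures in the system and reduce each inequality to the atlas of bilinear wave-Sobolev estimates collected in \cite{AFS0}, \cite{AFS}, \cite{ST}, with the curl-free contribution, which is supported on the $\tau=0$ sheet and hence off the wave cone, treated by a separate argument.

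For (\ref{4.1}) and (\ref{4.2}), first insert $\psi_{\pm_i}=\Pi_{\pm_i}(D)\psi_{\pm_i}$ so that the relevant symbol is $\Pi_{\pm_1}(\xi_1)^*\alpha^\nu \Pi_{\pm_2}(\xi_2)$, with $\alpha^\nu = I$ in (\ref{4.2}). Using the hermiticity of $\alpha^j$ and (\ref{2.6}), a standard computation of d'Ancona--Foschi--Selberg type gives the pointwise null-symbol bound $\abs{\Pi_{\pm_1}(\xi_1)^*\alpha^\nu \Pi_{\pm_2}(\xi_2)} \lesssim \angle(\pm_1\xi_1,\pm_2\xi_2)$, and the elementary estimate
$$\angle(\pm_1\xi_1,\pm_2\xi_2)^2 \lesssim \frac{\max(\sigma_0,\sigma_1,\sigma_2)}{\min(\abs{\xi_1},\abs{\xi_2})},$$
where $\sigma_0,\sigma_1,\sigma_2$ are the modulation weights of output and inputs, trades the angle for a half power of the dominant modulation. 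After this reduction, each estimate becomes a sum of three product inequalities in wave-Sobolev spaces, which can be read off the atlas under the exponent conditions in Theorem \ref{Theorem1.1}.

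For (\ref{4.3}) and (\ref{4.4}), the identity (\ref{2.7}) splits $\Pi_{\pm_0}(A_j\alpha^j\psi_{\pm_2})$ into a null piece, essentially $\Pi_{\pm_0}(\xi)\Pi_{\mp_2}(\xi_2) A_j \alpha^j \psi_{\pm_2}$, and a Riesz piece $\Pi_{\pm_0}(\xi) A_j R^j_{\pm_2}\psi_{\pm_2}$. The null piece again gains an angle factor, now between $\pm_0\xi$ and $\mp_2\xi_2$, and is treated as above; in the Riesz piece, $R^j_{\pm_2}$ is of order zero and no null structure is available, so the potential $A_j$ must absorb the loss, which is where algebraic conditions such as $2s-r>-\frac{1}{8}$ and $3s-2r>-1$ come into play. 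For the curl-free component in (\ref{4.3}) and in the second summand of (\ref{4.4}) I would combine the embedding $X^{l,\half+}_{\tau=0}\hookrightarrow C^0_t H^l_x$ with Strichartz estimates for the half-wave $\psi_{\pm_2}$; since $l$ may be taken just above $1$, which lies below the Sobolev multiplier threshold $\frac{n}{2}=\frac{3}{2}$, a pointwise product estimate fails, but a case split on which of the three modulations dominates, together with the concentration of $A^{cf}$ at $\tau=0$, allows the $\tau=0$ weight to be transferred to the dominant factor and the argument to close.

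The main obstacle, characteristic of the temporal gauge, is the choice $b=\frac{7}{8}+$ rather than $\half+$ for $A^{df}$, which following Tao \cite{T} is forced by the need to recover $A^{cf}$ from $\langle\psi,\psi\rangle$ via the elliptic equation (\ref{2'}). The dual modulation weight in (\ref{4.1}) and (\ref{4.4}) is therefore only $-\frac{1}{8}+$ instead of $-\half+$, so the bilinear estimates must gain $\frac{3}{8}$ more derivative than in the Coulomb or Lorenz gauge case. It is precisely this extra demand that produces the conditions $2s-r>-\frac{1}{8}$, $3s-2r>-1$, $2r-s>1$ and $s-l>-\frac{3}{4}$ in Theorem \ref{Theorem1.1}, and verifying them against the dyadic atlas of bilinear estimates, especially in the high-high-to-low frequency regime where the null-form gain is weakest, is the technically most delicate part.
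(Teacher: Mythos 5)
Your overall architecture (null structure extracted via \eqref{2.7} and the projection bound \eqref{AFS}, reduction to the bilinear atlas, a separate Tao-style treatment of the $\tau=0$ component, and the observation that $b=\frac78+$ degrades the dual modulation weight) matches the paper's strategy, but three of your key steps do not hold up as stated. First, the claimed pointwise bound $\abs{\Pi_{\pm_1}(\xi_1)^*\alpha^j\Pi_{\pm_2}(\xi_2)}\lesssim\angle(\pm_1\xi_1,\pm_2\xi_2)$ is false for $j=1,2,3$: by \eqref{2.6} one has $\Pi(\pm_1\xi_1)\alpha^j\Pi(\pm_2\xi_2)=\Pi(\pm_1\xi_1)\Pi(\mp_2\xi_2)\alpha^j\Pi(\pm_2\xi_2)\pm_2\tfrac{(\xi_2)_j}{|\xi_2|}\Pi(\pm_1\xi_1)\Pi(\pm_2\xi_2)$, and while the first summand is $O(\angle(\pm_1\xi_1,\pm_2\xi_2))$ by \eqref{AFS}, the second is $O(\angle(\pm_1\xi_1,\mp_2\xi_2))$, hence of size $O(1)$ when the two frequencies are nearly parallel; so neither term is uniformly small and the componentwise bilinear form $\langle\psi_{\pm_1},\alpha^j\psi_{\pm_2}\rangle$ is \emph{not} a null form. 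The paper recovers the missing smallness only from the outer structure: in \eqref{4.1} the projection $P$ contributes $\epsilon_{ikj}R^k_\pm$ which, after dualizing, pairs with $R^j_{\pm_2}$ to form a $Q^{jk}$-type null form between $\psi_{2,\pm_2}$ and the dual function; in \eqref{4.4} the identity \eqref{Adf} uses that $A^{df}$ is divergence-free to turn the Riesz piece into a $Q^{ij}$ null form. Your statement that in the Riesz piece ``no null structure is available, so the potential must absorb the loss'' is therefore exactly the step that fails: without the angular gain, the plain product estimate of Proposition \ref{Prop.3.3} requires $s_0+s_1+s_2>1$, i.e.\ $r>1$, which is incompatible with $r$ near $\frac58$.

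Second, for \eqref{4.3} your sketch omits the mechanism that makes $l>1$ (rather than $l>\frac32$) sufficient. ``Transferring the $\tau=0$ weight to the dominant modulation'' does not produce the needed gain; the paper follows Tao's argument: after the averaging principle reduces the weights to $\chi_{|\tau_1|\sim1}\chi_{||\tau_2|-|\xi_2||\sim1}$, the constraint $\tau_1+\tau_2+\tau_3=0$ localizes $|\xi_2|$ to an annulus $|T|+O(1)$ determined by the temporal frequency $T$ of $\psi$, Schur's test exploits the orthogonality in $T$, and the final spatial estimate $\sup_T\|\chi_{|\xi|=T+O(1)}\ast\langle\xi\rangle^{-2l}\|_{L^\infty}^{1/2}<\infty$ is precisely where $l>1$ enters. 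Third, for \eqref{4.2} the reduction to the wave-Sobolev atlas does not apply directly, since the output is measured with the weight $\langle\tau\rangle^{-\frac12++}$ of $X^{l,-\frac12++}_{\tau=0}$ and not with $\langle|\tau|-|\xi|\rangle^{-\frac12++}$; these are not comparable in the regime $\tau_0\approx0$, $|\xi_0|$ large. The paper instead proves \eqref{4.2} by a direct mixed-norm H\"older argument, $L^4_xL^{2+}_t\times L^4_xL^2_t\times L^2_xL^{\infty-}_t$, using the $L^p_xL^2_t$ Strichartz estimates of Proposition \ref{Prop.2.4}, and this is also where the condition $s-l\ge-\frac34$ is actually used.
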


The following product estimates for wave-Sobolev spaces were proven in \cite{AFS}. 

\begin{prop}
\label{Prop.3.3}
For $s_0,s_1,s_2,b_0,b_1,b_2 \in {\mathbb R}$ and $u,v \in   {\mathcal S} ({\mathbb R}^{3+1})$ the estimate
$$\|uv\|_{H^{-s_0,-b_0}} \lesssim \|u\|_{H^{s_1,b_1}} \|v\|_{H^{s_2,b_2}} $$ 
holds, provided the following conditions are satisfied:
\begin{align*}
\nonumber
& b_0 + b_1 + b_2 > \frac{1}{2} \, ,
& b_0 + b_1 \ge 0 \, ,\quad \qquad  
& b_0 + b_2 \ge 0 \, ,
& b_1 + b_2 \ge 0
\end{align*}
\begin{align*}
\nonumber
&s_0+s_1+s_2 > 2 -(b_0+b_1+b_2) \\
\nonumber
&s_0+s_1+s_2 > \frac{3}{2} -\min(b_0+b_1,b_0+b_2,b_1+b_2) \\
\nonumber
&s_0+s_1+s_2 > 1 - \min(b_0,b_1,b_2) \\
\nonumber
&s_0+s_1+s_2 > 1 \\
 &(s_0 + b_0) +2s_1 + 2s_2 > \frac{3}{2} \\
\nonumber
&2s_0+(s_1+b_1)+2s_2 > \frac{3}{2} \\
\nonumber
&2s_0+2s_1+(s_2+b_2) > \frac{3}{2}
\end{align*}
\begin{align*}
\nonumber
&s_1 + s_2 \ge \max(0,-b_0) \, ,\quad
\nonumber
s_0 + s_2 \ge \max(0,-b_1) \, ,\quad
\nonumber
s_0 + s_1 \ge \max(0,-b_2)   \, .
\end{align*}
\end{prop}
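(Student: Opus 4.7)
The plan is to follow the standard dyadic/bilinear-$L^2$ framework for wave-Sobolev spaces, which is essentially the method used in \cite{AFS}. First, by Plancherel and duality, the bound $\|uv\|_{H^{-s_0,-b_0}} \lesssim \|u\|_{H^{s_1,b_1}}\|v\|_{H^{s_2,b_2}}$ is equivalent to the trilinear form estimate
$$
\iiint \frac{F_0(\tau_0,\xi_0)\, F_1(\tau_1,\xi_1)\, F_2(\tau_2,\xi_2)}{\prod_{i=0}^{2}\langle\xi_i\rangle^{s_i}\langle|\tau_i|-|\xi_i|\rangle^{b_i}}\, d\sigma \;\lesssim\; \prod_{i=0}^{2} \|F_i\|_{L^2}
$$
taken over the constraint surface $\xi_0+\xi_1+\xi_2=0,\ \tau_0+\tau_1+\tau_2=0$ for non-negative $F_i \in L^2(\R^{1+3})$.

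Next, I would decompose dyadically by frequency $|\xi_i| \sim N_i$ and modulation $\bigl||\tau_i|-|\xi_i|\bigr| \sim L_i$ after splitting the Fourier support into the half-spaces $\pm_i \tau_i > 0$. The algebraic identity $\sum_i (\pm_i |\xi_i|) + \sum_i M_i = 0$ on the support forces the largest modulation $L_{\max}$ to dominate the resonance function $h(\xi_1,\xi_2) = \pm_0|\xi_1+\xi_2| \pm_1|\xi_1| \pm_2|\xi_2|$, which by elementary geometry is bounded below by $N_{\min}$ in the elliptic (same-sign) case and is controlled by an angular parameter in the hyperbolic case.

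The core step is then to invoke the classical bilinear $L^2$ estimates of Klainerman--Machedon--Tataru type for products of free waves at each dyadic scale $(N_1,N_2,L_1,L_2)$. Combined with the resonance lower bound, this reduces the trilinear integral to a scalar product in the $N_i, L_i$; the seven sharp inequalities on $(s_i,b_i)$ in the statement correspond precisely to convergence of the resulting dyadic sums in the three basic interaction regimes (high-high-to-low, low-high-to-high, and balanced). The mixed inequalities of the form $(s_0+b_0)+2s_1+2s_2 > 3/2$ arise from a concentration regime in which two inputs are supported in a common thin tube, and there one needs an angularly refined version of the bilinear bound.

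The chief obstacle, which is exactly what makes this atlas delicate, is the case analysis itself: no single inequality binds throughout, so one must enumerate the finitely many configurations of signs $\pm_i$ and dyadic orderings of the $N_i, L_i$ and verify in each case that some combination of the stated hypotheses closes the dyadic summation. Most of the real work in \cite{AFS} is the systematic bookkeeping of these cases.
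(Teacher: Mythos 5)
The paper does not prove this proposition at all: it is quoted verbatim from d'Ancona--Foschi--Selberg \cite{AFS} (``The following product estimates for wave-Sobolev spaces were proven in \cite{AFS}''), so there is no in-paper argument to compare yours against. Judged on its own, your outline correctly identifies the strategy that \cite{AFS} actually uses --- duality to a trilinear form on the convolution surface, dyadic decomposition in frequency and modulation, the resonance identity controlling $L_{\max}$, and bilinear $L^2$ (Klainerman--Machedon--Tataru type) estimates at each dyadic scale --- so as a description of the method it is accurate.

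However, as a \emph{proof} it has a genuine gap, and you essentially concede this in your last paragraph: the entire content of the proposition lies in the exhaustive case analysis that you do not carry out. The statement is not a single estimate but a precise list of about a dozen conditions, several of which are sharp, and nothing in your outline verifies that \emph{this particular list} is sufficient. In particular, the assertion that the mixed conditions such as $(s_0+b_0)+2s_1+2s_2>\frac{3}{2}$ ``arise from a concentration regime in which two inputs are supported in a common thin tube'' is stated without derivation; one would need to exhibit the relevant bilinear estimate in that regime, track the powers of $N_{\min}$, $N_{\max}$ and $L_{\min}$, $L_{\max}$ it produces, and show that summing the dyadic pieces requires exactly that inequality. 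The same applies to the borderline equality cases $s_1+s_2\ge\max(0,-b_0)$ etc., where summability fails at the endpoint and one must check that the non-strict inequalities are nonetheless admissible. Without this bookkeeping the proposal is a plausible plan, not a proof; to close the gap you would either have to reproduce the case analysis of \cite{AFS} or, as the paper does, simply cite that reference.
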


A consequence of this result is the following analogue for null forms.

\begin{prop}[Null form estimates, \cite{ST} ]  
\label{Prop.1.2}
Let $\sigma_0,\sigma_1,\sigma_2,\beta_0,\beta_1,\beta_2 \in \R$ and $u,v \in   {\mathcal S} ({\mathbb R}^{3+1})$ . Assume that
\begin{equation*}
\left\{
\begin{aligned} 
 & 0 \le \beta_0 < \frac12 < \beta_1,\beta_2 < 1,
  \\
 & \sum \sigma_i + \beta_0 > \frac32 - (\beta_0 + \sigma_1 + \sigma_2),
  \\
 & \sum \sigma_i > \frac32 - (\sigma_0 + \beta_1 + \sigma_2),
  \\
&  \sum \sigma_i > \frac32 - (\sigma_0 + \sigma_1 + \beta_2),
  \\
  &\sum \sigma_i + \beta_0 \ge 1,
  \\
 & \min(\sigma_0 + \sigma_1, \sigma_0 + \sigma_2, \beta_0 + \sigma_1 + \sigma_2) \ge 0,
\end{aligned}
\right.
\end{equation*} 
and that the last two inequalities are not both equalities. Let
\begin{align}
\nonumber
&{\mathcal F}(B_{\pm_1,\pm_2} (\psi_{1_{\pm_1}}, \psi_{2_{\pm_2}}))(\tau_0,\xi_0) \\
\label{2}
& := \int_{\tau_1+\tau_2= \tau_0\, \xi_1+\xi_2=\xi_0} |\angle(\pm_1 \xi_1,\pm_2 \xi_2)|  \widehat{\psi_{1_{\pm_1}}}(\tau_1,\xi_1)  \widehat{\psi_{2_{\pm_2}}}(\tau_2,\xi_2) d\tau_1 d\xi_1 \, , 
\end{align}
where $\angle(\xi_1,\xi_2)$ denotes the angle between $\xi_1$ and $\xi_2$ .
Then we have the null form estimate
$$
  \norm{B_{(\pm_1 \xi_1,\pm_2 \xi_2)}(u,v)}_{H^{-\sigma_0,-\beta_0}}
  \lesssim
  \norm{u}_{X^{\sigma_1,\beta_1}_{\pm_1}} \norm{v}_{X^{\sigma_2,\beta_2}_{\pm 2}}\, .
$$
\end{prop}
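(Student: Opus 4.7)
The strategy is to reduce the null form estimate to the product estimate of Proposition \ref{Prop.3.3} by absorbing the angular factor $|\angle(\pm_1\xi_1,\pm_2\xi_2)|$ into the modulation and spatial weights. Denote the modulation variables by $h_0 := |\tau_0|-|\xi_0|$ and $h_j := -\tau_j\pm_j|\xi_j|$ for $j=1,2$. The cornerstone is the standard angular bound
$$|\angle(\pm_1\xi_1,\pm_2\xi_2)|^2 \,\lesssim\, \frac{\langle h_0\rangle + \langle h_1\rangle + \langle h_2\rangle}{\min(\langle\xi_1\rangle,\langle\xi_2\rangle)},$$
which combines the law-of-cosines type inequality $\bigl||\xi_1+\xi_2| - (\pm_1|\xi_1| + \pm_2|\xi_2|)\bigr| \gtrsim \min(|\xi_1|,|\xi_2|)\,|\angle(\pm_1\xi_1,\pm_2\xi_2)|^2$, valid for the sign choice making the left side non-negative, with the bilinear resonance identity $\pm_0|\xi_0| - \pm_1|\xi_1| - \pm_2|\xi_2| = -h_0 + h_1 + h_2$ (up to signs), where $\pm_0$ is chosen to match the output sign $\tau_0 = \pm_0|\tau_0|$.

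Next, I would partition the Fourier region of integration into three pieces according to which of $\langle h_0\rangle, \langle h_1\rangle, \langle h_2\rangle$ is largest, and by symmetry reduce to $\langle\xi_1\rangle \le \langle\xi_2\rangle$. In the piece where $\langle h_j\rangle$ dominates, the angular bound gives $|\angle|\lesssim \langle h_j\rangle^{1/2}\langle\xi_1\rangle^{-1/2}$, and this factor is absorbed into the weights of the Fourier integrand. The net effect is to shift one modulation index and one spatial index by $-\tfrac12$: in the $\langle h_0\rangle$-dominant case the output index $\beta_0$ becomes $\beta_0-\tfrac12$ and the input spatial index $\sigma_1$ becomes $\sigma_1-\tfrac12$, while in the $\langle h_j\rangle$-dominant case ($j=1,2$) one shifts $\beta_j \to \beta_j-\tfrac12$ and $\sigma_1 \to \sigma_1-\tfrac12$. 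Each of the three reduced estimates is a pure product estimate of the type covered by Proposition \ref{Prop.3.3}.

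Applying Proposition \ref{Prop.3.3} in each case and substituting the shifted indices, one verifies that every hypothesis follows from the six conditions of Proposition \ref{Prop.1.2}. The restriction $0\le\beta_0<\tfrac12<\beta_1,\beta_2<1$ guarantees the sign conditions $b_0+b_1,\, b_0+b_2 \ge 0$ in all three cases (even when $b_0=\beta_0-\tfrac12<0$ in Case I); the displayed conditions $\sum\sigma_i + \beta_0 > \tfrac32 - (\beta_0 + \sigma_1 + \sigma_2)$ and its cyclic analogues encode the scaling-type inequalities $(s_0+b_0) + 2s_1 + 2s_2 > \tfrac32$ (and cyclic) after the $-\tfrac12$ shifts; the lower bound $\sum\sigma_i+\beta_0 \ge 1$ together with the ``not both equalities'' caveat enforces the strict $s_0+s_1+s_2 > 1$ hypothesis; and the positivity of $\min(\sigma_0+\sigma_1, \sigma_0+\sigma_2, \beta_0+\sigma_1+\sigma_2)$ supplies the requirements $s_i+s_j \ge \max(0,-b_k)$.

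The main obstacle is the exhaustive bookkeeping in the reduction step: each of the three case-splits produces a different reduced product estimate and thereby a different specialisation of Proposition \ref{Prop.3.3}'s hypotheses, and one must show that the six combined conditions of Proposition \ref{Prop.1.2} simultaneously imply all of them. The endpoint cases are the most delicate, especially $\beta_0 = 0$ in Case I, where $b_0 = -\tfrac12$ and survival of the hypotheses relies sharply on $\beta_1,\beta_2 > \tfrac12$; similarly the non-strict inequalities in Proposition \ref{Prop.1.2} must match the strict/non-strict pattern in Proposition \ref{Prop.3.3}, which explains the caveat about the last two inequalities not both being equalities.
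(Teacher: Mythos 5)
The paper does not actually prove this proposition---it is imported verbatim from Selberg--Tesfahun \cite{ST}---but your strategy (bound the angle by the three modulations divided by the minimum frequency, split according to which modulation dominates, absorb the resulting half powers into the modulation and spatial weights, and invoke Proposition \ref{Prop.3.3}) is exactly the method the paper itself uses to prove the companion Lemma \ref{Lemma}, and it is the standard route to this estimate. The one loose point is the appeal to ``symmetry'' to assume $\langle\xi_1\rangle\le\langle\xi_2\rangle$: since $(\sigma_1,\beta_1)$ and $(\sigma_2,\beta_2)$ differ in general, the gain $\min(\langle\xi_1\rangle,\langle\xi_2\rangle)^{-1/2}$ must be placed on whichever factor carries the smaller frequency, so the case split yields six reduced product estimates rather than three (exactly as in the paper's proof of Lemma \ref{Lemma}); this affects only the bookkeeping you already acknowledge as the main remaining work, not the soundness of the argument.
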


We also need the following variant of this result.
\begin{lemma} 
\label{Lemma}
Let  $s_0+s_1\ge 0$ , $s_0+s_2 \ge 0 $ , $s_1+s_2+\half \ge 0$ , $0 \le b_2 \le \half$  and $u,v \in   {\mathcal S} ({\mathbb R}^{3+1})$ . Assume that
$$s_0+s_1+s_2 +b_2 > 1 \, , $$
$$ (s_0+s_1+s_2+b_2)+(s_0+s_1) >\frac{3}{2} \, , $$
$$(s_0+s_1+s_2+\half) +(s_0+s_2) > \frac{3}{2}$$
Then we have the following estimate
$$
  \norm{B_{(\pm_1 \xi_1,\pm_2 \xi_2)}(u,v)}_{H^{-s_0,-\half}}
  \lesssim
  \norm{u}_{X^{s_1,\half}_{\pm_1}} \norm{v}_{X^{s_2,b_2}_{\pm 2}}\, .
$$
\end{lemma}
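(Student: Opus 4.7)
The plan is to adapt the standard null form strategy used to prove Proposition \ref{Prop.1.2} to the endpoint situation $\beta_1 = \half$ and $\beta_2 = b_2 \in [0,\half]$ allowed here. The starting point is the angular estimate
\[
|\angle(\pm_1 \xi_1, \pm_2 \xi_2)| \lesssim \left(\frac{\langle |\tau_0|-|\xi_0|\rangle + \langle -\tau_1 \pm_1 |\xi_1|\rangle + \langle -\tau_2 \pm_2 |\xi_2|\rangle}{\min(|\xi_1|,|\xi_2|)}\right)^{1/2}
\]
on the convolution support $\tau_1+\tau_2=\tau_0$, $\xi_1+\xi_2=\xi_0$. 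Splitting the numerator and distinguishing which of the three modulations dominates reduces the null form estimate to three bilinear product estimates: in each, a factor of the dominant modulation to the power $\half$ has been consumed against the corresponding weight on one side of the inequality, and a residual spatial factor of $\min(|\xi_1|,|\xi_2|)^{-\half}$ is left over.

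Each of these three reduced estimates should then be established by Proposition \ref{Prop.3.3} applied to the appropriately shifted parameters, after absorbing the factor $\min(|\xi_1|,|\xi_2|)^{-\half}$ into the Sobolev regularity of whichever input has the smaller spatial frequency (using the pointwise bound $\min(|\xi_1|,|\xi_2|)^{-\half} \lesssim |\xi_1|^{-\half} + |\xi_2|^{-\half}$ together with a dyadic Littlewood--Paley decomposition). The three asymmetric sum hypotheses are designed to match the three non-trivial sum conditions of Proposition \ref{Prop.3.3} in the corresponding configurations: the overall balance $s_0+s_1+s_2+b_2 > 1$ comes out as the unweighted sum condition, while the weighted-sum conditions $(s_0+s_1+s_2+b_2)+(s_0+s_1) > \frac{3}{2}$ and $(s_0+s_1+s_2+\half)+(s_0+s_2) > \frac{3}{2}$ correspond to conditions (h) and (g) of Proposition \ref{Prop.3.3} after the respective modulation shifts and placement of the spatial loss. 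The positivity assumptions $s_0+s_1 \ge 0$, $s_0+s_2 \ge 0$, $s_1+s_2+\half \ge 0$ furnish the final three positivity conditions of Proposition \ref{Prop.3.3}.

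The main obstacle is the bookkeeping required to absorb $\min(|\xi_1|,|\xi_2|)^{-\half}$ into the Sobolev weights rigorously near the low-frequency region, where the factor is singular. The saving observation is that whenever $\min(|\xi_1|,|\xi_2|) \ll 1$, momentum conservation $\xi_0 = \xi_1 + \xi_2$ forces the opposite input spatial frequency to satisfy $|\xi_j| \sim |\xi_0|$, so the singular factor can be redistributed between $\langle \xi_0 \rangle^{-s_0}$ and $\langle \xi_j \rangle^{s_j}$ without producing a low-frequency blow-up. A secondary subtlety is that in the case $b_2 = 0$ with output modulation dominant, the modulation sum condition $b'_0+b'_1+b'_2 > \half$ of Proposition \ref{Prop.3.3} becomes borderline; this is resolved by reserving an arbitrarily small portion of the modulation gain on another factor, which is permitted by the strict form of the sum hypotheses.
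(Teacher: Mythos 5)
Your overall strategy --- bound the angle by ratios of modulations to the minimal frequency and feed the resulting weighted product estimates into Proposition \ref{Prop.3.3} --- is the same as the paper's. But there is a genuine gap in the term where the modulation $\langle -\tau_2\pm_2|\xi_2|\rangle$ of the second factor dominates. You state the angle bound with exponent $\half$ on all three summands and then assert that ``a factor of the dominant modulation to the power $\half$ has been consumed against the corresponding weight.'' For the second factor the available weight is only $b_2\le\half$ (and in the application $b_2=\frac{1}{8}-$), so after consuming $\langle -\tau_2\pm_2|\xi_2|\rangle^{b_2}$ you are left with an uncontrolled positive power $\langle -\tau_2\pm_2|\xi_2|\rangle^{\half-b_2}$ of what is, in this case, the \emph{largest} modulation; it cannot be traded against either of the other weights. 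The missing ingredient is the refined form of the angle estimate (Selberg \cite{S}, Lemma 2.1, quoted as (\ref{angle}) in the paper): because the angle is bounded, the exponent on each summand can be lowered independently, and the paper puts exponent $b_2$ on exactly this term so that the consumed power matches the available weight. Your closing remark about reserving a small portion of the modulation gain addresses a different, milder borderline issue and does not repair this one.

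Two secondary points. The paper's angle estimate carries $\min(\langle\xi_1\rangle,\langle\xi_2\rangle)$, not $\min(|\xi_1|,|\xi_2|)$, in the denominator, so the factor to be absorbed is $\min(\langle\xi_1\rangle,\langle\xi_2\rangle)^{-\half}$, which is bounded at low frequency; your paragraph on momentum conservation and low-frequency redistribution then becomes unnecessary. Also, splitting according to which of $\langle\xi_1\rangle,\langle\xi_2\rangle$ realizes the minimum doubles the count: the paper reduces to six product estimates rather than three, and the three sum hypotheses of the Lemma are precisely what is needed to verify the conditions of Proposition \ref{Prop.3.3} for all six.
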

\begin{proof}
Combine Prop. \ref{Prop.3.3} and the following estimate for the angle:
\begin{equation}
\label {angle}
 \angle(\pm_1 \xi_1,\pm_2 \xi_2)
 \lesssim \left(\frac{\langle - \tau_1 \pm_1 |\xi_1| \rangle }{\min(\langle \xi_1 \rangle,\langle \xi_2 \rangle)} \right)^{\half} + \left(\frac{\langle - \tau_2 \pm_2 |\xi_2| \rangle }{\min(\langle \xi_1 \rangle,\langle \xi_2 \rangle)} \right)^{b_2} + \left(\frac{\langle |\tau_3| -|\xi_3|| \rangle }{\min(\langle \xi_1 \rangle,\langle \xi_2 \rangle)} \right)^{\half}  ,
\end{equation}
where $\xi_j \in \R^3$ , $\tau_j \in \R$ with $\xi_1+\xi_2+\xi_3=0$ , $\tau_1+\tau_2+\tau_3=0$ . For a proof we refer to Selberg \cite{S}, Lemma 2.1. This implies that it is sufficient to prove the following estimates:
\begin{align*}
\|uv\|_{H^{-s_0,0}} & \lesssim \|u\|_{H^{s_1+\half,\half}} \|v\|_{H^{s_2,b_2}} \, ,\\
\|uv\|_{H^{-s_0,0}} & \lesssim \|u\|_{H^{s_1,\half}} \|v\|_{H^{s_2+\half,b_2}} \, ,\\
\|uv\|_{H^{-s_0,-\half}} & \lesssim \|u\|_{H^{s_1,0}} \|v\|_{H^{s_2+\half,b_2}} \, ,\\
\|uv\|_{H^{-s_0,-\half}} & \lesssim \|u\|_{H^{s_1+\half,0}} \|v\|_{H^{s_2,b_2}} \, , \\
\|uv\|_{H^{-s_0,-\half}} & \lesssim \|u\|_{H^{s_1+b_2,\half}} \|v\|_{H^{s_2,0}} \, ,\\
\|uv\|_{H^{-s_0,-\half}} & \lesssim \|u\|_{H^{s_1,\half}} \|v\|_{H^{s_2+b_2,0}} \, .
\end{align*}
Using Prop. \ref{Prop.3.3} these estimates are fulfilled , if $s_0+s_1+s_2+b_2 > 1$ and moreover $(s_0+s_1+\half+s_2)+(s_0+s_1+b_2) > \frac{3}{2}$ , $(s_0+s_1+\half+s_2)+(s_1+s_2+\half) > \frac{3}{2}$ ,  $(s_0+s_1+\half+s_2)+(s_0+s_2) > \frac{3}{2}$ for the first four estimates. The last two estimates  require $(s_0+s_1+s_2+b_2) +(s_0+s_1) > \frac{3}{2}$ . These conditions are fulfilled under our assumptions.
\end{proof}

The following estimate was used by \cite{T} for the Yang-Mills equation in temporal gauge.
\begin{prop}
\label{Prop.2.4}
The following estimates hold for $n=3$ :
\begin{align}
\label{3.1}
\|u\|_{L^p_x L^2_t} & \lesssim \|u\|_{X^{1-\frac{3}{p},\frac{1}{2}+}_{|\tau|=|\xi|}} \quad \mbox{for} \,\, 4 \le p < \infty \, ,\\
\label{3.2}
\|u\|_{L^{\infty}_x L^2_t} &\lesssim \|u\|_{X^{1+,\frac{1}{2}+}_{|\tau|=|\xi|}} \, , \\
\label{3.3} 
\|u\|_{L^p_x L^{2+}_t} & \lesssim \|u\|_{X^{1-\frac{3}{p}+,\frac{1}{2}+}_{|\tau|=|\xi|}}  \quad \mbox{for} \,\, 4 \le p < \infty \, . 
\end{align}
\end{prop}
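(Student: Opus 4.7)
The plan is to reduce the three estimates, via the transfer principle and a dyadic Littlewood--Paley decomposition, to single-frequency bounds for free half-wave solutions $e^{\pm it|\nabla|}f$, which are then obtained from Plancherel in $t$ combined with the Stein--Tomas restriction theorem for $S^2\subset\R^3$.

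\textbf{Step 1 (transfer principle and Littlewood--Paley).} Since $b=\half+>\half$, the standard superposition argument writes any $u\in X^{s,\half+}_{|\tau|=|\xi|}$ as an integral of modulated free half-waves and reduces matters to the single-scale bound
\[
\|e^{\pm it|\nabla|}P_N f\|_{L^p_x L^2_t} \lesssim N^{1-3/p}\|P_N f\|_{L^2}, \qquad 4\le p<\infty,
\]
together with the analogous $\|e^{\pm it|\nabla|}P_N f\|_{L^\infty_x L^2_t}\lesssim N\|P_N f\|_{L^2}$ for $p=\infty$. Summing the dyadic pieces in $L^p_x L^2_t$ by Minkowski consumes an arbitrarily small power $N^{0+}$, which accounts for the $+$'s appearing in the Sobolev exponents $1-3/p+$ and $1+$ of the proposition.

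\textbf{Step 2 (Plancherel and Stein--Tomas).} For $\hat f$ supported in $|\xi|\sim N$, change variables $\xi = \tau\omega$ with $\tau\in[N,2N]$, $\omega\in S^2$, and apply Plancherel in $t$ to obtain, for each fixed $x$,
\[
\bigl\|e^{it|\nabla|}P_N f(x,\cdot)\bigr\|_{L^2_t}^2 = c\int_N^{2N}\tau^4\,|F_\tau(x)|^2\,d\tau, \qquad F_\tau(x):=\int_{S^2}\hat f(\tau\omega)\,e^{i\tau x\cdot\omega}\,d\omega.
\]
Up to a $\tau^{-2}$ factor, $F_\tau$ is the inverse Fourier transform of an $L^2$-density on the sphere of radius $\tau$. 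The Stein--Tomas theorem for $S^2\subset\R^3$ (endpoint $p=4$) together with parabolic rescaling gives
\[
\|F_\tau\|_{L^p_x} \lesssim \tau^{-1-3/p}\,\|\hat f\|_{L^2(d\sigma_\tau)} \qquad (4\le p<\infty),
\]
while the trivial Cauchy--Schwarz on $S^2$ yields $\|F_\tau\|_{L^\infty_x}\lesssim \tau^{-1}\|\hat f\|_{L^2(d\sigma_\tau)}$. Moving the $L^p_x$ norm inside the $\tau$-integral by Minkowski (for $p<\infty$) or taking pointwise supremum (for $p=\infty$), and using the identity $\int_N^{2N}\|\hat f\|_{L^2(d\sigma_\tau)}^2\,d\tau = \|P_N f\|_{L^2}^2$ with $\tau\sim N$, yields the single-scale bounds of Step 1.

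\textbf{Step 3 (the $L^{2+}_t$ variant).} Estimate (3.3) follows by interpolating (3.1) with a cheap Sobolev endpoint such as $\|u\|_{L^p_x L^\infty_t}\lesssim \|u\|_{X^{3(\half-1/p),\half+}_{|\tau|=|\xi|}}$, itself obtained from $X^{s,\half+}_{|\tau|=|\xi|}\hookrightarrow L^\infty_t H^s_x$ (valid since $b>\half$) followed by Sobolev embedding in $x$. Writing $L^{2+}_t = [L^2_t, L^\infty_t]_\theta$ with $\theta$ arbitrarily small produces the exponent $1-3/p+$ as required.

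\textbf{Main obstacle.} The crucial input is the sharp Stein--Tomas restriction estimate at the endpoint $p=4$; once this is in hand, all $p\in (4,\infty]$ follow by interpolation with the trivial Cauchy--Schwarz bound on the sphere, and the transfer principle handles the passage from free waves to $X^{s,\half+}_{|\tau|=|\xi|}$. The other delicate issue is summing the Littlewood--Paley pieces in $L^p_x L^2_t$, which lacks exact orthogonality and therefore forces the $\epsilon$-losses in the Sobolev indices displayed in the proposition.
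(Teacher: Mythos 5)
Your core mechanism is essentially the paper's own: the single-sphere bound you derive from Stein--Tomas, combined with Plancherel in $t$ and Minkowski, is exactly the content of the Klainerman--Machedon--Tataru estimate $\|\mathcal{F}_t u\|_{L^2_\tau L^4_x}\lesssim\|u_0\|_{\dot H^{1/4}}$ that the paper cites, and the transfer principle handles the passage to $X^{s,\half+}_{|\tau|=|\xi|}$ in both arguments. But two steps of your write-up do not hold as stated. First, the Littlewood--Paley decomposition in Step 1 is both unnecessary and harmful. Your bound $\|F_\tau\|_{L^p_x}\lesssim\tau^{-1-3/p}\|\hat f\|_{L^2(d\sigma_\tau)}$ already has the exact $\tau$-homogeneity needed: moving $L^p_x$ inside $\int_0^\infty\tau^4|F_\tau(x)|^2\,d\tau$ by Minkowski over \emph{all} $\tau$ and using polar coordinates gives $\|e^{it|\nabla|}f\|_{L^p_xL^2_t}\lesssim\big\||\nabla|^{1-3/p}f\big\|_{L^2}$ with no frequency localization and no loss. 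By instead proving the estimate block by block and summing with Cauchy--Schwarz you lose an $N^{0+}$, so your argument only yields $\|u\|_{L^p_xL^2_t}\lesssim\|u\|_{X^{1-\frac{3}{p}+,\half+}_{|\tau|=|\xi|}}$. Your claim that the $+$'s in the proposition absorb this loss misreads the statement: (\ref{3.1}) has \emph{no} $+$ in its Sobolev index, and its sharp form at $p=4$ is what is invoked in the proof of (\ref{4.2}). (The $+$ in (\ref{3.2}) comes from the failure of $H^{\frac{3}{4},4}_x\hookrightarrow L^\infty_x$ at the endpoint, and the $+$ in (\ref{3.3}) from upgrading $L^2_t$ to $L^{2+}_t$; neither has anything to do with dyadic summation.)

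Second, the interpolation endpoint in Step 3, $\|u\|_{L^p_xL^\infty_t}\lesssim\|u\|_{X^{3(\half-\frac{1}{p}),\half+}_{|\tau|=|\xi|}}$, is not justified by the chain you give: $X^{s,\half+}_{|\tau|=|\xi|}\hookrightarrow L^\infty_tH^s_x\hookrightarrow L^\infty_tL^p_x$ produces the norm with $L^\infty_t$ on the \emph{outside}, and Minkowski only gives $\|u\|_{L^\infty_tL^p_x}\le\|u\|_{L^p_xL^\infty_t}$, which is the wrong direction. The paper sidesteps this by interpolating, at fixed $p$, with norms whose two exponents coincide --- the Strichartz bound $\|u\|_{L^4_{xt}}\lesssim\|u\|_{X^{\half,\half+}_{|\tau|=|\xi|}}$ at $p=4$ and the Sobolev bound $\|u\|_{L^\infty_{xt}}\lesssim\|u\|_{X^{\frac{3}{2}+,\half+}_{|\tau|=|\xi|}}$ at $p=\infty$ --- and then interpolating in $p$. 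Replace your endpoint with one of these and Step 3 goes through.
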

\begin{proof}
(\ref{3.1}) for the case $p=4$ was proven by \cite{T}, Prop. 4.1. Alternatively we may use
\cite{KMBT} (appendix by D. Tataru) Thm. B2:
$$ \|{\mathcal F}_t u \|_{L^2_{\tau} L^4_x} \lesssim \|u_0\|_{\dot{H}^{\frac{1}{4}}_x} \, , $$
if $u=e^{it |\nabla|}u_0$ , and ${\mathcal F}_t$ denotes the Fourier transform with respect to time. This implies by Plancherel, Minkowski's inequality and Sobolev's embedding theorem
$$ \|u\|_{L^p_x L^2_t} = \|{\mathcal F}_t u\|_{L^p_x L^2_{\tau}} \lesssim \|{\mathcal F}_t u\|_{L^2_{\tau} L^p_x} \lesssim \|{\mathcal F}_t u\|_{L^2_{\tau} H^{\frac{3}{4}-\frac{3}{p},4}_x} \lesssim \|u_0\|_{H^{1-\frac{3}{p},2}_x} \, . $$
The transfer principle (cf. e.g. \cite{KS}) gives (\ref{3.1}). (\ref{3.2}) follows similarly using $H^{\frac{3}{4}+,4}_x  \hookrightarrow L^{\infty}_x$ . 
For the proof of (\ref{3.3}) we start with the standard Strichartz-estimate $\|u\|_{L^4_{xt}} \lesssim \|u\|_{X^{\half,\half+}_{|\tau|=|\xi|}}$ and interpolate this with (\ref{3.1}) for $p=4$, which implies (\ref{3.3}) in the case $p=4$ , namely $\|u\|_{L^4_x L^{2+}_t} \lesssim \|u\|_{H^{\frac{1}{4}+,\half+}}$ . Interpolation of (\ref{3.2}) with the Sobolev type estimate $\|u\|_{L^{\infty}_{xt}} \lesssim \|u\|_{X^{\frac{3}{2}+,\half+}_{|\tau|=|\xi|}} $ implies (\ref{3.3}) in the case $p=\infty$ . Interpolating (\ref{3.3}) for $p=4$ and $p=\infty$ implies the general case $4 \le p \le \infty$ .
\end{proof}

\section{Proof of Proposition \ref{Prop.1}}
\begin{proof} [Proof of (\ref{4.1})]
We obtain with the Riesz transform $R^k= (i|\nabla|)^{-1} \partial_k$ :
$$ |\nabla|	^{-1} \left(\nabla \times \left(\begin{array}{c} \langle \psi,\alpha_1\psi \rangle \\ \langle \psi,\alpha_2 \psi \rangle\\ \langle \psi,\alpha_3\psi \rangle \end{array} \right)\right)_i = \epsilon_{ikj} R^k \langle \psi_1,\alpha^j \psi_2 \rangle \, , $$
where $\epsilon_{ikj}$ is the totally skewsymmetric tensor mith $\epsilon_{123}=1$ . We recall the definition $P A= |\nabla|^{-2} \nabla \times (\nabla \times A)$ , so that it suffices to prove
\begin{align*}
 \left\|P \left(\begin{array}{c} \langle \psi,\alpha_1\psi \rangle \\ \langle \psi,\alpha_2 \psi \rangle\\ \langle \psi,\alpha_3\psi \rangle \end{array} \right)\right\|_{X^{r-1,b-1+}_{\pm}} &\lesssim \sum_i \|\epsilon_{ikj} \sum_{\pm_1,\pm_2} R^k_{\pm} \langle \psi_{1,\pm_1} , \alpha^j \psi_{2,\pm_2} \rangle\|_{X^{r-1,b-1+}_{\pm}} \\
&\lesssim \|\psi_{1,\pm_1}\|_{X^{s,\half+}_{\pm_1}} \|\psi_{2,\pm_2}\|_{X^{s,\half+}_{\pm_2}} \, . 
\end{align*}
We now use the identity (\ref{2.7}), which implies
\begin{align*}
\epsilon_{ikj} R^k_{\pm} \langle \psi_{1,\pm_1} ,\alpha^j\psi_{2,\pm_2} \rangle & =\epsilon_{ikj} R^k_{\pm} \langle \psi_{1,\pm_1} ,\Pi_{\mp_2}(\alpha^j \psi_{2,\pm_2}) \rangle\\
& \quad -\epsilon_{ikj} R^k_{\pm} \langle \psi_{1,\pm_1} ,R^j{\pm_2 } \psi_{2,\pm_2} \rangle \, = \, I + II
\end{align*}
Using that $\Pi_{\pm_1}$ is a projector we obtain
$$ I = \epsilon_{ikj} R^k_{\pm} \langle \psi_{1,\pm_1} ,\Pi_{\pm_1}\Pi_{\mp_2}(\alpha^j \psi_{2,\pm_2}) \rangle \, .$$  The crucial point now is that by \cite{AFS}, Lemma 2  
\begin{equation}
\label{AFS}
|\Pi(\xi_1) \Pi(-\xi_2) z| \lesssim |z| \angle(\xi_1,\xi_2) \, , 
\end{equation}
so that $I$ is a null form. This implies that we may ignore the factor $\epsilon_{ikj} R^k_{\pm}$ and it suffices to prove
\begin{equation}
\label{1.a}
\|B_{\pm_1,\pm_2}(\psi_{1,\pm_1},\psi_{2,\pm_2})\|_{X^{r-1,b-1+}_{\pm}} \lesssim \|\psi_{1,\pm_1}\|_{X^{s,\half+}_{\pm_1}}\|\psi_{2,\pm_2}\|_{X^{s,\half+}_{\pm_2}} \, . 
\end{equation}
Concerning $II$ we have to prove by duality
\begin{align*}
& \left| \int \epsilon_{ikj} \langle \psi_{1,\pm_1} , R^j_{\pm_2}\psi_{2,\pm_2} \rangle \overline{R^k_{\pm} \phi_{\pm}} \,dx\, dt \right| \\
& \lesssim \|\psi_{1,\pm_1}\|_{X^{s,\half+}_{\pm_1}}\|\psi_{2,\pm_2}\|_{X^{s,\half+}_{\pm_2}} \|\phi_{\pm}\|_{X^{1-r,1-b-1-}_{\pm}} \, .
\end{align*}
We observe a null form of $Q^{jk}$ -type on the left hand side between the factors $\psi_{2,\pm_2}$ and $\phi_{\pm}$ . It is well-known (cf. \cite{ST} or \cite{HO}, Lemma 2.6) that the bilinear form $Q^{\gamma \beta}_{\pm_{1}, \pm_{2}}$ ,  defined by
\begin{align*}
	& Q^{\gamma \beta}_{\pm_{2}, \pm_{0}} (\phi_{2_{\pm_{2}}}, \phi_{0_{ \pm_{0}}}) 
	:= R^{\gamma}_{\pm_{2}} \phi_{2_{\pm_{2}}} R^{\beta}_{\pm_{0}} \phi_{0_{ \pm_{0}}} - R^{\beta}_{\pm_{2}} \phi_{2_{ \pm_{2}}} R^{\gamma}_{\pm_{0}} \phi_{0_{ \pm_{0}}} \, ,
\end{align*}
similarly to the standard null form $Q_{\gamma \beta}$ , which is defined by replacing the modified Riesz transforms $R^{\mu}_{\pm}$ by $\partial^{\mu}$, fulfills the following estimate:
$$  Q^{\gamma \beta}_{\pm_{2}, \pm_{0}} (\phi_{2_{ \pm_{2}}}, \phi_{0_{ \pm_{0}})} \precsim B_{\pm_2,\pm_0}(\psi_{2_{\pm_2}},\psi_{0_{\pm_0}} )\, , $$
where $B_{\pm_2,\pm_0}(\psi_{2_{\pm_2}},\psi_{0_{\pm_0}}) $ is defined in (\ref{2}).

Thus we have to prove :
\begin{equation}
\label{1.b}
\|B_{\pm_1,\pm_2}(\psi_{1,\pm_1},\phi_{\pm})\|_{X^{-s,-\half-}_{\pm_1}} \lesssim \|\psi_{2,\pm_2}\|_{X^{s,\half+}_{\pm_2}}\|\phi_{\pm}\|_{X^{1-r,1-b-}_{\pm}} \, . 
\end{equation}
The estimates (\ref{1.a}) and (\ref{1.b}) are proven by Prop. \ref{Prop.1.2} and Lemma \ref{Lemma}, respectively. We choose $b= \frac{7}{8}+$ .\\
 For (\ref{1.a}) we use the parameters in  Prop. \ref{Prop.1.2} as follows:
$\sigma_0=1-r$ , $\beta_0=1-b-=\frac{1}{8}-$, $\sigma_1=\sigma_2=s$ and $\beta_1=\beta_2= \half+ $ . We require $s \ge r-1$ and $\sigma_0+\sigma_1+\sigma_2 + \beta_0 = 1-r+\frac{1}{8}+2s- > 1 $ $\Leftrightarrow$ $2s-r >-\frac{1}{8}$ , one or our asumptions. Moreover we need $4s-r > \frac{1}{4}$ , which immediately follows for $s >\frac{1}{4}$ , and $3s-2r > -1$ , which we also assumed. \\
For (\ref{1.b}) we choose in Lemma \ref{Lemma}: $s_0=s$ ,  $s_1 = s$ ,  , $s_2=1-r$ , $b_2 = \frac{1}{8}-$ . We require
$s_0+s_1+s_2+b_2 = 2s+1-r+\frac{1}{8} > 1 $ $\Leftrightarrow$ $2s-r > - \frac{1}{8}$ , one of our asumptions. Moreover we need
$(s_0+s_1+s_2+b_2)+(s_0+s_1) = (2s+1-r+\frac{1}{8}) + 2s> \frac{3}{2}$ $\Leftrightarrow$ $4s-r > \frac{3}{8}$ , which follows from our assumptions $2s-r > - \frac{1}{8}$ and $s > \frac{1}{4}$  and finally $(s_0+s_1+s_2+\half)+(s_0+s_2) = 2s+1-r+\half+s+1-r > \frac{3}{2}$ $\Leftrightarrow$ $3s-2r > -1$ , one of our assumptions.
\end{proof}

\begin{proof} [Proof of (\ref{4.4})] We start with (\ref{2.7}) which implies
$$ \Pi_{\pm}(A^{df}_{j,\pm_1} \alpha^j \psi_{\pm_2}) =  \Pi_{\pm}(A^{df}_{j,\pm_1} \Pi_{\mp_2} \alpha^j \psi_{\pm_2}) - \Pi_{\pm}(A^{df}_{j,\pm_1} R^j_{\pm_2} \psi_{\pm_2}) = I+II \, . $$
The desired estimate for $I$ reduces by duality to
\begin{align*}
&\left| \int A^{df}_{j,\pm_1} \langle \psi_{0_{\pm_0}},\Pi_{\mp_2} \alpha^j \psi_{\pm_2}  \rangle \, dt \, dx  \right| \\& \quad \quad \lesssim \|\psi_{0_{\pm_0}}\|_{X^{-s,-\half-}_{\pm_0}}  \|\psi_{\pm_2}\|_{X^{s,-\half+}_{\pm_2}} \|A^{df}_{j,\pm_1}\|_{X^{r,b}_{\pm_1}} \, .
\end{align*}
The left hand side equals
$$\left| \int A^{df}_{j,\pm_1} \langle \Pi_{\mp_2} \Pi_{\pm_0} \psi_{0_{\pm_0}},\Pi_{\mp_2} \alpha^j \psi_{\pm_2}  \rangle \, dt \, dx  \right| \, , $$
which contains a null form between $\psi_{0,\pm_0}$ and $\psi_{2,\pm_2}$ by (\ref{AFS}), so that it remains to prove
\begin{equation}
\label{4.a}
\|B_{\pm_0,\pm_2}(\psi_{0,\pm_0} , \psi_{\pm_2})\|_{X^{-r,-b}_{\pm_1}} \lesssim \|\psi_{0,\pm_0}\|_{X^{-s,\half-}_{\pm_0}} \|\psi_{\pm_2}\|_{X^{s,\half+}_{\pm_2}}  \, .
\end{equation}
In order to prove that II contains also a null form, we use the well-known identity (by \cite{KM} or \cite{ST2}):
\begin{equation}
\label{Adf}
A^{df}_j R^j_{\pm_1} \psi_{\pm_1} = -\epsilon^{jkl} \partial_k w_l R^j_{\pm_1} \psi_{\pm_1} = (\nabla w_l \times \frac{\nabla}{|\nabla|} \psi_{\pm_1})^l \, , 
\end{equation}
where 
$$w=|\nabla|^{-2} \nabla \times A = |\nabla|^{-2} \nabla \times A^{df} + |\nabla|^{-2} \nabla \times A^{cf}\, .$$

The last summand results in a term of the type $A^{cf}\psi$ , if we ignore its special structure, which is possible for our purposes. We postpone its estimate to the proof of (\ref{4.3}) below, where a similar estimate is proven.

The first summand results in a  $Q^{ij}$-type null form, which is essentially of the type $Q^{ij}(A^{df},\psi_{\pm_1})$ . This implies (cf. \cite{ST} or \cite{HO}, Lemma 2.6) that it remains to prove for this part the following estimate:
\begin{equation}
\label{4.b}
\| B_{\pm_2,\pm_1}(A^{df}_{j,\pm_2} , \psi_{\pm_1}) \|_{X^{s,-\half++}_{\pm}} \lesssim \|A^{df}_{j,\pm_2} \|_{X^{r,b}_{\pm_1}} \|\psi_{\pm_1}\|_{X^{s,\half+}_{\pm_2}} \, .
\end{equation}
We now prove (\ref{4.a}) and (\ref{4.b}) by Prop. \ref{Prop.1.2}. For (\ref{4.a}) we choose the parameters in this proposition as follows:
$\sigma_0 = r$ , $\sigma_1 = -s$ , $\sigma_2 = s$ , $\beta_0 = b = \frac{7}{8}+$ , $\beta_1 = \beta_2 = \half+$ . We require $ r \ge s$ and $\sigma_0+\sigma_1+\sigma_2 + \beta_0 = r+\frac{7}{8} > 1$ , thus $r > \frac{1}{8}$ . Moreover we need $2r+s > 1$ , which holds for $r>\frac{5}{8}$ and $s > \frac{1}{4}$ , and $2r-s > 1$ as assumed. Next, for (\ref{4.b}) we have to choose $\sigma_0=-s$ , $\sigma_1=r$ , $\sigma_2 = s$ , $\beta_0=\beta_2=\half+$ , $\beta_1 = b = \frac{7}{8}+$ . This requires  $r > \half$ , $2r+s > \half$ , which is satisfied. Moreover we need $r>\frac{3}{2}-(-s+b+s) $ $\Leftrightarrow$ $r> \frac{3}{2}- b = \frac{5}{8}-$  and $2r-s > 1$ , which holds by our assumptions.
\end{proof}

\begin{proof} [Proof of (\ref{4.3})]
We even prove the estimate with $X^{s,-\frac{1}{2}++}_{|\tau|=|\xi|}$ replaced by
$X^{s,0}_{|\tau|=|\xi|}$ on the left hand side. Morover we remark that the matrices $\alpha^j$ are completely irrelevant for the estimate. This also implies that the estimate for the term of type $A^{cf} \psi$ in the proof of (\ref{4.4}) , which we postponed, is also proven here.

We may reduce to
\begin{align*}
 \int_* \frac{\widehat{u}_1(\tau_1,\xi_1)}{\langle  \xi_1\rangle^l \langle \tau_1 \rangle^{\frac{1}{2}+}} 
\frac{\widehat{u}_2(\tau_2,\xi_2)}{\langle \xi_2 \rangle^s\langle |\tau_2| - |\xi_2|\rangle^{\frac{1}{2}+}}\langle \xi_3 \rangle^s
\widehat{u}_3(\tau_3,\xi_3) d\xi d\tau
\lesssim \prod_{i=1}^3 \|u_i\|_{L^2_{xt}} \, ,
\end{align*}
where * denotes integration over $\xi = (\xi_1,\xi_2,\xi_3) , \tau=(\tau_1,\tau_2,\tau_3)$ with $\xi_1+\xi_2+\xi_3=0$ and $\tau_1+\tau_2+\tau_3 =0$. We assume here and in the following without loss of generality that the Fourier transforms are nonnnegative. \\
Case 1: $|\xi_1| \ge |\xi_2|$ $\Rightarrow$ $\langle \xi_3 \rangle^s \lesssim \langle \xi_1 \rangle^s $ .\\
The estimate reduces to
\begin{align*}
 \int_* \frac{\widehat{u}_1(\tau_1,\xi_1)}{ \langle \tau_1 \rangle^{\frac{1}{2}+}} 
\frac{\widehat{u}_2(\tau_2,\xi_2)}{\langle \xi_2 \rangle^l \langle |\tau_2| - |\xi_2|\rangle^{\frac{1}{2}+}}
\widehat{u}_3(\tau_3,\xi_3) d\xi d\tau 
\lesssim \prod_{i=1}^3 \|u_i\|_{L^2_{xt}} \, .
\end{align*}
This follows under the assumption $l > 1$ from the estimate
\begin{align}
\nonumber
\Big|\int v_1 v_2 v_3 dx dt \Big| & \lesssim \|v_1\|_{L^2_x L^{\infty}_t} \|v_2\|_{L^{\infty}_x L^2_t} \|v_3\|_{L^2_x L^2_t} \\
\label{20}
&\lesssim \|v_1\|_{X^{0,\frac{1}{2}+}_{\tau=0}} \|v_2\|_{X^{1+,\frac{1}{2}+}_{|\tau|=|\xi|}} 
\|v_3\|_{X^{0,0}_{|\tau|=|\xi|}} \, ,
\end{align}
where we used (\ref{3.2}) for the second factor.\\
Case 2: $|\xi_2| \ge |\xi_1|$ $\Rightarrow$ $\langle \xi_3 \rangle^s \lesssim \langle \xi_2 \rangle^s $.\\
In this case the desired estimate follows from
\begin{equation}
\label{21}
\int_* m(\xi_1,\xi_2,\xi_3,\tau_1,\tau_2,\tau_3) \widehat{u}_1(\xi_1,\tau_1)  \widehat{u}_2(\xi_2,\tau_2) \widehat{u}_3(\xi_3,\tau_3) d\xi d\tau \lesssim \prod_{i=1}^3 \|u_i\|_{L^2_{xt}} \, , 
\end{equation}
where 
$$ m = \frac{1}{ \langle |\tau_2| - |\xi_2|\rangle^{\frac{1}{2}+}  \langle \xi_1 \rangle^l \langle \tau_1 \rangle^{\frac{1}{2}+}} \, .$$
We imitate Tao's proof \cite{T} for a similar estimate. \\
By two applications of the averaging principle (\cite{T1}, Prop. 5.1) we may replace $m$ by
$$ m' = \frac{ \chi_{||\tau_2|-|\xi_2||\sim 1} \chi_{|\tau_1| \sim 1}}{ \langle \xi_1 \rangle^l} \, . $$
Let now $\tau_2$ be restricted to the region $\tau_2 =T + O(1)$ for some integer $T$. Then $\tau_3$ is restricted to $\tau_3 = -T + O(1)$, because $\tau_1 + \tau_2 + \tau_3 =0$, and $\xi_2$ is restricted to $|\xi_2| = |T| + O(1)$. The $\tau_3$-regions are essentially disjoint for $T \in {\mathbb Z}$ and similarly the $\tau_2$-regions. Thus by Schur's test (\cite{T1}, Lemma 3.11) we only have to show
\begin{align*}
 &\sup_{T \in {\mathbb Z}} \int_* \frac{ \chi_{\tau_3=-T+O(1)} \chi_{\tau_2=T+O(1)} \chi_{|\tau_1|\sim 1} \chi_{|\xi_2|=|T|+O(1)}}{\langle \xi_1 \rangle^l}\cdot \\
 & \hspace{14em} \cdot\widehat{u}_1(\xi_1,\tau_1) \widehat{u}_2(\xi_2,\tau_2)
\widehat{u}_3(\xi_3,\tau_3) d\xi d\tau \lesssim \prod_{i=1}^3 \|u_i\|_{L^2_{xt}} \, . 
\end{align*}
The $\tau$-behaviour of the integral is now trivial, thus we reduce to
\begin{equation}
\label{50}
\sup_{T \in {\mathbb N}} \int_{\sum_{i=1}^3 \xi_i =0}  \frac{ \chi_{|\xi_2|=T+O(1)}}{ \langle \xi_1 \rangle^l} \widehat{f}_1(\xi_1)\widehat{f}_2(\xi_2)\widehat{f}_3(\xi_3)d\xi \lesssim \prod_{i=1}^3 \|f_i\|_{L^2_x} \, .
\end{equation}
We apply Schwarz' inequality so that
\begin{align*}
L.H.S. \, of \,  (\ref{50})
\lesssim \sup_{T \in{\mathbb N}} \| \chi_{|\xi|=T+O(1)} \ast \langle \xi \rangle^{-2l}\|^{\frac{1}{2}}_{L^{\infty}(\mathbb{R}^3)} \prod_{i=1}^3 \|f_i\|_{L^2_x} \lesssim \prod_{i=1}^3 \|f_i\|_{L^2_x}\, .
\end{align*}
The last estimate follows by an elementary calculation for $l>1$ ,  which completes the proof.
\end{proof}

\begin{proof} [Proof of (\ref{4.2})] We remark that for our purpose it is admissible to replace the singular operator $|\nabla|^{-1}$ by $\langle \nabla \rangle^{-1}$ in three space dimensions, where we use \cite{T1}, Cor. 8.2.

We may reduce to
\begin{align*}
 \int_* \frac{\widehat{u}_1(\tau_1,\xi_1)}{\langle  \xi_1\rangle^s \langle |\tau_1|-|\xi_1| \rangle^{\frac{1}{2}+}} 
\frac{\widehat{u}_2(\tau_2,\xi_2)}{\langle \xi_2 \rangle^s \langle |\tau_2| - |\xi_2|\rangle^{\frac{1}{2}+}} \frac{\langle \xi_3 \rangle^{l-1}
\widehat{u}_3(\tau_3,\xi_3)}{\langle \tau_3 \rangle^{\frac{1}{2}-}} d\xi d\tau
\lesssim \prod_{i=1}^3 \|u_i\|_{L^2_{xt}} \, .
\end{align*}
We assume without loss of generality $|\xi_1| \le |\xi_2|$ , so that $|\xi_3| \lesssim |\xi_2|$ . \\
It suffices to prove
\begin{align*}
 \int_* \frac{\widehat{u}_1(\tau_1,\xi_1)}{\langle  \xi_1\rangle^s \langle |\tau_1|-|\xi_1| \rangle^{\frac{1}{2}+}} 
\frac{\widehat{u}_2(\tau_2,\xi_2)}{ \langle  \xi_2\rangle^{s+1-l}\langle |\tau_2|-|\xi_2| \rangle^{\frac{1}{2}+}} \frac{
\widehat{u}_3(\tau_3,\xi_3)}{\langle \tau_3 \rangle^{\frac{1}{2}-}} d\xi d\tau 
\lesssim \prod_{i=1}^3 \|u_i\|_{L^2_{xt}} \, ,
\end{align*}
which is implied under our assumption $s-l \ge -\frac{3}{4}$ by
\begin{align*}
\Big|\int v_1 v_2 v_3 dx dt \Big| & \lesssim \|v_1\|_{L^4_x L^{2+}_t} \|v_2\|_{L^4_x L^2_t} \|v_3\|_{L^2_x L^{\infty -}_t} \\
&\lesssim \|v_1\|_{X^{\frac{1}{4}+,\frac{1}{2}+}_{|\tau|=|\xi|}} \|v_2\|_{X^{\frac{1}{4},\half+}_{|\tau|=|\xi|}} 
\|v_3\|_{X^{0,\frac{1}{2}-}_{\tau =0}} \, ,
\end{align*}
where we used (\ref{3.3}) in the case $p=4$ for the first two factors and Sobolev's embedding theorem for the last one.
\end{proof}

\section{The result in two space dimensions}

If we apply the projection $P$ of $A$ onto the divergence-free part $A^{df}$ we obtain from (\ref{1'}) the system
\begin{equation}
\square A^{df} = - P \left(\begin{array}{c} \langle \psi,\alpha_1\psi \rangle \\ \langle \psi,\alpha_2 \psi \rangle \end{array} \right) \, .
\end{equation}
By the definition of $A^{cf}$ the equation (\ref{2'}) can be rewritten as
\begin{equation}
\partial_t A^{cf} = -|\nabla|^{-2} \nabla \langle \psi, \psi \rangle \, . \end{equation} 

As in the three-dimensional case  the proof of the local existence theorem reduces to the following bilinear estimates.

\begin{prop}
\label{Prop.1'}
Let the assumptions on  $s$,$r$ and $l$ in Theorem \ref{Theorem1.1} be fulfilled. Let $ b = \half+$ . Then the estimates (\ref{4.1}),(\ref{4.3}),(\ref{4.4}) apply and
\begin{align}
\label{4.2'}
\| |\nabla|^{-1+\epsilon} \langle \psi_{\pm_1} , \psi_{\pm_2} \rangle \|_{X^{l-\epsilon,-\half++}_{\tau=0}} & \lesssim \|\psi_{\pm_1}\|_{X^{s,\half+}_{\pm_1}} \|\psi_{\pm_2}\|_{X^{s,\half+}_{\pm_2}}  
\end{align}
for a sufficiently small $\epsilon > 0$ ,
where $\pm$ , $\pm_1$ and $\pm_2$ denote independent signs.
\end{prop}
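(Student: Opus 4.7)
The approach is to emulate the proof of (\ref{4.2}) given in Section 3, with two $n=2$ specific adjustments: invoking a 2D wave Strichartz estimate in place of Proposition \ref{Prop.2.4}, and using the hypothesis $\epsilon > 0$ to handle the Riesz factor $|\nabla|^{-1+\epsilon}$, which in two dimensions is genuinely worse than the 3D factor $|\nabla|^{-1}$.

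By duality against a test function in $X^{-(l-\epsilon),1/2--}_{\tau=0}$ and Plancherel, (\ref{4.2'}) reduces to the trilinear Fourier integral estimate
\begin{align*}
\int_* \frac{\widehat{u}_1(\tau_1,\xi_1)\,\widehat{u}_2(\tau_2,\xi_2)\,\widehat{u}_3(\tau_3,\xi_3)\,\langle\xi_3\rangle^{l-\epsilon}}
{|\xi_3|^{1-\epsilon}\langle\xi_1\rangle^s\langle\xi_2\rangle^s\langle-\tau_1\pm_1|\xi_1|\rangle^{1/2+}\langle-\tau_2\pm_2|\xi_2|\rangle^{1/2+}\langle\tau_3\rangle^{1/2--}}\,d\tau\,d\xi
\lesssim \prod_{i=1}^3\|u_i\|_{L^2_{xt}},
\end{align*}
where $*$ denotes the convolution constraint $\xi_1+\xi_2+\xi_3=0$, $\tau_1+\tau_2+\tau_3=0$, and we may take $\widehat u_i\ge 0$. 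By symmetry assume $|\xi_1|\le|\xi_2|$, so that $|\xi_3|\le 2|\xi_2|$ and $\langle\xi_3\rangle^{l-\epsilon}\lesssim \langle\xi_2\rangle^{l-\epsilon}$.

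The first step is to replace the singular weight $|\xi_3|^{-1+\epsilon}$ by the regular weight $\langle\xi_3\rangle^{-1+\epsilon}$. For $n=3$ this is done via \cite{T1}, Cor.\ 8.2, which relies on $|\xi|^{-1}\in L^2_{\mathrm{loc}}(\mathbb R^3)$; in $n=2$ the same membership fails for $\epsilon=0$, but the weaker statement $|\xi|^{-(1-\epsilon)}\in L^2_{\mathrm{loc}}(\mathbb R^2)$ does hold precisely because $\int_0^1 r^{-1+2\epsilon}\,dr <\infty$, so the same replacement is valid for any $\epsilon>0$. This is exactly the reason why $|\nabla|^{\epsilon}A^{cf}$ rather than $A^{cf}$ appears in Theorem \ref{Theorem1.1} (see Remark 1). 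After this reduction the estimate takes the form of an $n=2$ analogue of (\ref{4.2}), with effective $\xi_3$-weight $\langle\xi_3\rangle^{l-1}\lesssim\langle\xi_2\rangle^{\max(0,l-1)}$, and is closed by Hölder combined with a 2D wave Strichartz estimate $\|v\|_{L^q_t L^r_x}\lesssim\|v\|_{X^{\sigma,1/2+}_{|\tau|=|\xi|}}$ for a suitable admissible triple $(q,r,\sigma)$ applied to the first two factors, together with the Sobolev embedding $X^{0,1/2-}_{\tau=0}\hookrightarrow L^2_x L^p_t$ ($p<\infty$) applied to the third, exactly as in the 3D argument.

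The main obstacle is the regularization step $|\nabla|^{-1+\epsilon}\mapsto \langle\nabla\rangle^{-1+\epsilon}$, which is where the hypothesis $\epsilon>0$ is used essentially; once this is carried out, what remains is an exponent-arithmetic verification that the standing hypotheses $s>0$, $l>1/2$, $s-l\ge -1/2$ of Theorem \ref{Theorem1.1} are compatible with the resulting wave-Sobolev estimate, for $\epsilon$ sufficiently small.
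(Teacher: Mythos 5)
Your proposal follows the paper's own proof of (\ref{4.2'}) essentially step for step: the duality/Plancherel reduction to a trilinear Fourier integral, the replacement of $|\nabla|^{-1+\epsilon}$ by $\langle\nabla\rangle^{-1+\epsilon}$ via Tao's Cor.\ 8.2 (with $\epsilon>0$ needed for exactly the reason you give), the normalization $|\xi_1|\le|\xi_2|$ with absorption of the $\xi_3$-weight under $s>0$ and $s-l\ge-\half$, and the closing H\"older estimate combined with the two-dimensional Strichartz-type bounds and Sobolev embedding. The only difference is that the paper makes the final exponent split explicit, namely $\|v_1\|_{L^{2+}_xL^{2+}_t}\|v_2\|_{L^{\infty-}_xL^2_t}\|v_3\|_{L^2_xL^{\infty-}_t}$ controlled by $X^{0+,0+}_{|\tau|=|\xi|}\times X^{\half,\half+}_{|\tau|=|\xi|}\times X^{0,\half-}_{\tau=0}$, where you leave the admissible triple unspecified.
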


The following  bilinear estimates in wave-Sobolev spaces were proven in the two-dimensional case in \cite{AFS0}.
\begin{prop}
\label{Prop.2}
For $s_0,s_1,s_2,b_0,b_1,b_2 \in {\mathbb R}$ and $u,v \in   {\mathcal S} ({\mathbb R}^{2+1})$ the estimate
$$\|uv\|_{H^{-s_0,-b_0}} \lesssim \|u\|_{H^{s_1,b_1}} \|v\|_{H^{s_2,b_2}} $$ 
holds, provided the following conditions are satisfied:
\begin{align*}
\nonumber
& b_0 + b_1 + b_2 > \frac{1}{2} \, ,
& b_0 + b_1 \ge 0 \, ,\quad \qquad  
& b_0 + b_2 \ge 0 \, ,
& b_1 + b_2 \ge 0
\end{align*}
\begin{align*}
\nonumber
&s_0+s_1+s_2 > \frac{3}{2} -(b_0+b_1+b_2) \\
\nonumber
&s_0+s_1+s_2 > 1 -\min(b_0+b_1,b_0+b_2,b_1+b_2) \\
\nonumber
&s_0+s_1+s_2 > \frac{1}{2} - \min(b_0,b_1,b_2) \\
\nonumber
&s_0+s_1+s_2 > \frac{3}{4} \\
 &(s_0 + b_0) +2s_1 + 2s_2 > 1 \\
\nonumber
&2s_0+(s_1+b_1)+2s_2 > 1 \\
\nonumber
&2s_0+2s_1+(s_2+b_2) > 1 
\end{align*}
\begin{align*}
\nonumber
&s_1 + s_2 \ge \max(0,-b_0) \, ,\quad
\nonumber
s_0 + s_2 \ge \max(0,-b_1) \, ,\quad
\nonumber
s_0 + s_1 \ge \max(0,-b_2)   \, .
\end{align*}
\end{prop}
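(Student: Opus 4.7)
The plan is to follow the standard scheme of d'Ancona--Foschi--Selberg and reduce the bilinear bound $\|uv\|_{H^{-s_0,-b_0}} \lesssim \|u\|_{H^{s_1,b_1}}\|v\|_{H^{s_2,b_2}}$ to a trilinear integral estimate. Dualizing and writing $w$ for the test function, one is to bound
$$
\Big|\iiint K(\xi_0,\xi_1,\xi_2,\tau_0,\tau_1,\tau_2)\, \widehat u(\tau_1,\xi_1)\,\widehat v(\tau_2,\xi_2)\,\widehat w(\tau_0,\xi_0)\, d\tau\, d\xi\Big| \lesssim \prod_i \|\cdot\|_{L^2_{t,x}},
$$
where integration is over $\sum \xi_i = 0$, $\sum \tau_i = 0$ and the kernel $K$ carries the weights $\prod_i \langle\xi_i\rangle^{-s_i}\langle|\tau_i|-|\xi_i|\rangle^{-b_i}$. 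The reduction step I would then perform is a double Littlewood--Paley decomposition: split each factor into dyadic pieces by spatial frequency $N_i \sim \langle\xi_i\rangle$ and by modulation $L_i \sim \langle|\tau_i|-|\xi_i|\rangle$, so that matters reduce to a fixed-scale trilinear block estimate that has to be summed in $(N_0,N_1,N_2,L_0,L_1,L_2)$.

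The main analytic input at each dyadic scale is the family of bilinear $L^2_{t,x}$ estimates for products of two modulation-localized waves in $2+1$ dimensions, in the spirit of Klainerman--Machedon, Foschi--Klainerman, Tao and Wolff. The transfer principle converts $X^{s,b}_{|\tau|=|\xi|}$ norms into norms of free solutions, and the localized bilinear estimate produces
$$
\|u_{N_1,L_1} v_{N_2,L_2}\|_{L^2_{t,x}} \lesssim C(N_0,N_1,N_2,L_0,L_1,L_2)\, \|u_{N_1,L_1}\|_{L^2}\|v_{N_2,L_2}\|_{L^2},
$$
with $C$ depending on which frequency is highest and which modulation is dominant. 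Inserting this into the dualized integral and stripping off the weights gives, in each dyadic regime, a product of powers of $N_i$ and $L_i$ that must sum.

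The scheme is then a finite case analysis by the coarse geometry of the frequencies: high-high-to-low ($N_1 \sim N_2 \gg N_0$), high-low ($N_0 \sim N_1 \gg N_2$ and permutations) and comparable frequencies, crossed with the three modulation scenarios according to which of $L_0,L_1,L_2$ dominates the resonance function $||\tau_0|-|\xi_0|| + ||\tau_1|-|\xi_1|| + ||\tau_2|-|\xi_2|| \sim$ angle-weighted frequency differences. In each case one picks the sharpest available bilinear bound and performs the dyadic summation; the conditions listed in the proposition are then matched one-to-one with the convergence requirements. Schematically: the lines $s_0+s_1+s_2 > 3/4$ and $(s_0+b_0)+2s_1+2s_2 > 1$ (and its two permutations) encode the $2+1$-dimensional $L^4_{t,x}$ Strichartz threshold, the lines $s_0+s_1+s_2 > \tfrac{1}{2}-\min b_i$ and the $\min$-versions arise from the high-high-to-low summation after using the modulation gain, while the bottom three endpoint conditions $s_i+s_j \ge \max(0,-b_k)$ handle the low-frequency contributions where one factor is near the origin.

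The main obstacle is verifying that every listed hypothesis is actually exhausted by some dyadic regime, which is why the sharp proof in [AFS0] is a long and careful bookkeeping argument rather than a single clean estimate; the hardest case is typically the concentrated high-high regime with all modulations comparable to the lowest frequency, which pins down the borderline $3/4$. Since the statement above is the result of [AFS0] verbatim, the cleanest path in the present paper is to invoke that reference as a black box, and my plan above is the road map one would follow to reprove it from scratch.
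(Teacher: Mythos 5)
Your proposal ends by invoking \cite{AFS0} as a black box, and that is exactly what the paper does: Proposition \ref{Prop.2} is stated verbatim from that reference and no proof is given in the paper itself. Your preceding road map (dualization, dyadic decomposition in frequency and modulation, case analysis matched to the listed conditions) is a fair sketch of how the reference proceeds, but it is only a sketch and would not by itself substitute for the detailed bookkeeping there; as a justification of the proposition within this paper, the citation is the intended and sufficient argument.
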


\begin{lemma} 
\label{Lemma1}
Let  $s_0+s_1\ge 0$ , $s_0+s_2 \ge 0 $ , $s_1+s_2+\half \ge 0$  and $u,v \in   {\mathcal S} ({\mathbb R}^{2+1})$ . Assume that
$$s_0+s_1+s_2  > \frac{1}{4} \, , $$
$$ (s_0+s_1+s_2+\half)+(s_0+s_1) >1 \, , $$
$$(s_0+s_1+s_2+\half) +(s_0+s_2) > 1$$
Then we have the following estimate
$$
  \norm{B_{(\pm_1 \xi_1,\pm_2 \xi_2)}(u,v)}_{H^{-s_0,-\half+}}
  \lesssim
  \norm{u}_{X^{s_1,\half-}_{\pm_1}} \norm{v}_{X^{s_2,\half-}_{\pm 2}}\, .
$$
\end{lemma}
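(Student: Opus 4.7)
The approach is to follow the proof of Lemma \ref{Lemma} in the two-dimensional setting, substituting Proposition \ref{Prop.2} for Proposition \ref{Prop.3.3}. The first step is to establish the two-dimensional analogue of the angle estimate (\ref{angle}): starting from the classical geometric inequality $|\xi_1||\xi_2|\angle(\pm_1\xi_1,\pm_2\xi_2)^2 \lesssim \max_i \mathfrak{h}_i$, with $\mathfrak{h}_1 = \langle -\tau_1 \pm_1 |\xi_1|\rangle$, $\mathfrak{h}_2 = \langle -\tau_2 \pm_2 |\xi_2|\rangle$, $\mathfrak{h}_3 = \langle ||\tau_3| - |\xi_3||\rangle$ (cf.\ \cite{S}, Lemma 2.1), and interpolating with the trivial bound $\angle \le 1$, one obtains
$$
\angle(\pm_1\xi_1,\pm_2\xi_2) \lesssim \sum_{i=1}^3 \left(\frac{\mathfrak{h}_i}{\min(\langle\xi_1\rangle,\langle\xi_2\rangle)}\right)^{\half-}.
$$
The uniform exponent $\half-$ is dictated by the fact that in this lemma the three factors (namely $u$, $v$, and the dual test function) all carry modulation weight $\half-$.

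Inserting this decomposition of the angle into the $L^2$-dual formulation reduces the claim to six bilinear wave-Sobolev estimates: three come from the choice of which modulation $\mathfrak{h}_i$ is absorbed, each split into two sub-cases according to whether the residual $\min^{-(\half-)}$ factor is assigned to the $u$- or the $v$-side. Schematically, in each sub-estimate one of the Sobolev indices $s_1,s_2$ is shifted by $+\half-$, the modulation exponent on the ``absorbed'' factor drops to $0$, and (when $\mathfrak{h}_3$ is absorbed) the target modulation exponent on the left-hand side improves from $-\half+$ to $0$.

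Each such sub-estimate is then verified by invoking Proposition \ref{Prop.2}. The hypothesis $s_0+s_1+s_2 > \frac{1}{4}$ is what is needed to clear the ``$s_0+s_1+s_2 > \frac{3}{4}$'' threshold of Proposition \ref{Prop.2} after the half-shift in one Sobolev index. The two weighted hypotheses $(s_0+s_1+s_2+\half)+(s_0+s_j) > 1$ for $j=1,2$ match the conditions of type $(s_i+b_i)+2s_j+2s_k > 1$. The sign constraints $s_0+s_1 \ge 0$, $s_0+s_2 \ge 0$, $s_1+s_2+\half \ge 0$ handle the non-strict ``$s_i+s_j \ge \max(0,-b_k)$'' conditions, and the $b_0+b_1+b_2>\half$ requirement is automatic since the three modulation exponents sum to $\frac{3}{2}-$.

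The main technical obstacle is the bookkeeping: for each of the six sub-estimates one must identify the binding inequality in Proposition \ref{Prop.2} and track the $\epsilon$-slack implicit in the $\half\pm$ notation so that the strict inequalities stated in the hypotheses indeed yield strict inequalities at every step. The analysis is entirely parallel to that in the proof of Lemma \ref{Lemma}, with the numerology adjusted from three to two space dimensions.
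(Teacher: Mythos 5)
Your proposal is correct and takes essentially the same approach as the paper: the paper's own proof of this lemma is literally ``similar to the proof of Lemma \ref{Lemma} by use of Prop.~\ref{Prop.2} and the estimate (\ref{angle})'' and is otherwise omitted, and your reduction via the angle bound with uniform exponent $\half-$ to six product estimates checked against Proposition \ref{Prop.2} supplies exactly the details the paper suppresses. The numerology you indicate (the $\frac14$ threshold coming from the $\frac34$ condition after a half-shift, and the two weighted hypotheses matching the $(s_i+b_i)+2s_j+2s_k>1$ conditions) is consistent with the two-dimensional analogue of the computation carried out explicitly in the proof of Lemma \ref{Lemma}.
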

\begin{proof}
The proof is similar to the proof of Lemma \ref{Lemma} by use of Prop. \ref{Prop.2} and the estimate (\ref{angle}) and omitted.
\end{proof}

The following proposition replaces Prop. \ref{Prop.2.4}.

\begin{lemma}
\label{Lemma2}
For $2 \le p \le 6$ the following estimates hold:
\begin{align*}
\|u\|_{L^p_x L^2_t} & \lesssim \|u\|_{X^{\frac{1}{2}(\frac{1}{2}-\frac{1}{p}),\frac{3}{2}(\frac{1}{2}-\frac{1}{p})+}_{|\tau|=|\xi|}} \, , \\
\|u\|_{L^p_x L^{2+}_t} & \lesssim \|u\|_{X^{\frac{1}{2}(\frac{1}{2}-\frac{1}{p})+,\frac{3}{2}(\frac{1}{2}-\frac{1}{p})+}_{|\tau|=|\xi|}} \, .
\end{align*}
\end{lemma}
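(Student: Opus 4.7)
The plan is to adapt the three-dimensional argument of Proposition~\ref{Prop.2.4} to $n=2$, the key change being that the Stein--Tomas endpoint for the sphere $S^{n-1}\subset\R^n$ shifts from $p=4$ (when $n=3$) to $p=2(n+1)/(n-1)=6$ (when $n=2$). So the natural endpoint in these mixed-norm estimates sits at $p=6$ rather than $p=4$.

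First I would prove the endpoint $L^6_xL^2_t$ bound directly. For $u=e^{it|\nabla|}u_0$ the time Fourier transform $\mathcal F_t u(\tau,\cdot)$ is a constant multiple of the inverse spatial Fourier transform of the measure $\hat u_0(\tau\omega)\,d\sigma_\tau$ on the circle $|\xi|=\tau$. By Stein--Tomas restriction for $S^1\subset\R^2$ followed by rescaling,
$$
\|\mathcal F_t u(\tau,\cdot)\|_{L^6_x}\lesssim \tau^{1/6}\|\hat u_0\|_{L^2(S^1_\tau)}.
$$
Squaring, integrating in $\tau$, and passing to polar coordinates then gives $\|\mathcal F_t u\|_{L^2_\tau L^6_x}\lesssim \|u_0\|_{\dot H^{1/6}}$. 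Plancherel in $t$, Minkowski's inequality (valid since $6\ge 2$) and the transfer principle (cf.\ \cite{KS}) now produce
$
\|u\|_{L^6_xL^2_t}\lesssim \|u\|_{X^{1/6,\,1/2+}_{|\tau|=|\xi|}}.
$
Interpolating this endpoint with the trivial $p=2$ bound $\|u\|_{L^2_{tx}}\le\|u\|_{X^{0,0+}_{|\tau|=|\xi|}}$ completes the first asserted estimate: writing $\tfrac1p=\tfrac{1-\theta}{2}+\tfrac\theta6$ gives $\theta=3(\tfrac12-\tfrac1p)$, and the interpolated spatial and temporal indices are $\theta/6=\tfrac12(\tfrac12-\tfrac1p)$ and $\theta/2=\tfrac32(\tfrac12-\tfrac1p)$, exactly as required.

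For the $L^p_xL^{2+}_t$ version I would mirror the concluding step of Proposition~\ref{Prop.2.4}. The $p=6$ endpoint follows by interpolating the $L^6_xL^2_t$ bound just proved with the $2+1$-dimensional wave Strichartz estimate $\|u\|_{L^6_{tx}}\lesssim \|u\|_{X^{1/2,\,1/2+}_{|\tau|=|\xi|}}$ (the pair $(q,r)=(6,6)$ being wave-admissible in two space dimensions, with scaling index $\gamma=1/2$), trading a small amount of spatial regularity for a small amount of $t$-integrability. The $p=2$ endpoint follows from Minkowski in the form $\|u\|_{L^2_xL^{2+}_t}\le \|u\|_{L^{2+}_tL^2_x}$, applied to the interpolation of $\|u\|_{L^2_{tx}}\le\|u\|_{X^{0,0+}_{|\tau|=|\xi|}}$ with the embedding $\|u\|_{L^\infty_tL^2_x}\lesssim \|u\|_{X^{0,\,1/2+}_{|\tau|=|\xi|}}$. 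Interpolating these two endpoints in $p$ then gives the second estimate on the full range $2\le p\le 6$.

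The principal obstacle is the Stein--Tomas type endpoint at $p=6$; once that is in hand, the remaining steps are routine, provided one carefully checks that the interpolated indices coincide with $(\tfrac12(\tfrac12-\tfrac1p),\,\tfrac32(\tfrac12-\tfrac1p))$ in both estimates. Alternatively one may quote the two-dimensional analog of Theorem B2 in the appendix of \cite{KMBT} directly, bypassing the restriction-theory derivation.
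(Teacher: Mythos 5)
Your proposal is correct and follows essentially the same route as the paper: the $p=6$ endpoint $\|u\|_{L^6_xL^2_t}\lesssim\|u\|_{X^{1/6,1/2+}_{|\tau|=|\xi|}}$ via the $L^2_\tau L^6_x$ bound of \cite{KMBT} (which you additionally rederive from Stein--Tomas), Plancherel/Minkowski and the transfer principle, then interpolation with the $2{+}1$-dimensional Strichartz estimate $\|u\|_{L^6_{xt}}\lesssim\|u\|_{X^{1/2,1/2+}_{|\tau|=|\xi|}}$ for the $L^{2+}_t$ variant, and finally interpolation with the trivial $p=2$ case. The only deviation is your slightly more careful treatment of the $p=2$ endpoint in the second estimate, which is equivalent to the paper's direct interpolation.
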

\begin{proof}
By \cite{KMBT}, Thm. B2 we obtain
$ \|{\mathcal F}_t u \|_{L^2_{\tau} L^6_x} \lesssim \|u_0\|_{\dot{H}^{\frac{1}{6}}} \, , $
if $u= e^{it|\nabla|} u_0$ and ${\mathcal F}_t$ denotes the Fourier transform with respect to time. This implies by Plancherel and Minkowski's inequality
$$ \|u\|_{L^6_x L^2_t} = \|{\mathcal F}_t u \|_{L^6_x L^2_{\tau}} \le \|{\mathcal F}_t u \|_{L^2_{\tau} L^6_x} \lesssim \|u_0\|_{\dot{H}^{\frac{1}{6}}} \, . $$
The transfer principle (cf. e.g. \cite{KS}) implies 
\begin{equation}
\|u\|_{L^6_x L^2_t} \lesssim \|u\|_{X^{\frac{1}{6},\frac{1}{2}+}_{|\tau|=|\xi|}} \, .
\end{equation}
Interpolation with the standard Strichartz estimate 
(combined with the transfer principle)
$
\|u\|_{L^6_{xt}} \lesssim \|u\|_{X^{\frac{1}{2},\frac{1}{2}+}_{|\tau|=|\xi|}} 
$
gives
\begin{equation}
\|u\|_{L^6_x L^{2+}_t} \lesssim \|u\|_{X^{\frac{1}{6}+,\frac{1}{2}+}_{|\tau|=|\xi|}} \, .
\end{equation}
Interpolation of the last two inequalities with the trivial identity
$\|u\|_{L^2_x L^2_t} = \|u\|_{X^{0,0}_{|\tau|=|\xi|}} $ completes the proof.
\end{proof}

Next we prove Proposition \ref{Prop.1'} in the two-dimensional case, i.e.   the bilinear estimates (\ref{4.1}),(\ref{4.2'}),(\ref{4.3}) and (\ref{4.4}).
\begin{proof}[Proof of (\ref{4.1})]
For $A= \left(\begin{array}{c} \langle \psi,\alpha_1\psi \rangle \\ \langle \psi,\alpha_2 \psi \rangle \end{array} \right)$ we obtain
$R_1 A_2-R_2 A_1 = \epsilon_{kj} R^k \langle \psi,\alpha^j \psi \rangle \, , $
Recalling the definition  $PA:=  (R_2(R_1 A_2-R_2 A_1),-R_1(R_1 A_2 - R_2 A_1))$  , it suffices to prove
\begin{align*}
 \left\|P \left(\begin{array}{c} \langle \psi,\alpha_1\psi \rangle \\ \langle \psi,\alpha_2 \psi \rangle \end{array} \right)\right\|_{X^{r-1,-\half++}_{\pm}} &\lesssim  \|\epsilon_{kj} \sum_{\pm_1,\pm_2} R^k_{\pm} \langle \psi_{1,\pm_1} , \alpha^j \psi_{2,\pm_2} \rangle\|_{X^{r-1,-\half++}_{\pm}} \\
&\lesssim \|\psi_{1,\pm_1}\|_{X^{s,\half+}_{\pm_1}} \|\psi_{2,\pm_2}\|_{X^{s,\half+}_{\pm_2}} \, . 
\end{align*}
Exactly as in the three-dimensional case we have to prove (\ref{1.a}) and (\ref{1.b}). By Lemma \ref{Lemma1} in the case $b=\half+$ the following conditions are required as one easliy checks: for (\ref{1.a}): $2s-r > -\frac{3}{4}$ and $3s-2r >-\frac{3}{2}$ ,
and for (\ref{1.b}): $3s-2r > - \frac{3}{2}$ and $4s-r > -\half$.
These conditions are assumed.
\end{proof}

\begin{proof}[Proof of (\ref{4.4})]
Following the proof in three dimensions we replace (\ref{Adf}) by
\begin{align*}
A^{df}_j R^j_{\pm_1} \psi_{\pm_1} & = R_2 |\nabla|^{-1} (\nabla \times A) R^1_{\pm_1} \psi_{\pm_1} - R_1 |\nabla|^{-1} (\nabla \times A) R^2_{\pm_1} \psi_{\pm_1} \\
& = \epsilon_{ij} R^j |\nabla|^{-1}  (\nabla \times A) R^i_{\pm_1} \psi_{\pm_1} \\
&=\epsilon_{ij} R^j |\nabla|^{-1}  (\nabla \times A^{df}) R^i_{\pm_1} \psi_{\pm_1}
+\epsilon_{ij} R^j |\nabla|^{-1}  (\nabla \times A^{cf}) R^i_{\pm_1} \psi_{\pm_1} \\
& \precsim Q_{21}(A^{df},\psi_{\pm_1}) + A^{cf} \psi_{\pm_1} \, ,
\end{align*}
where $\nabla \times A := R_1 A_2-R_2 A_1$ , and $u \precsim v$ means $|\widehat{u}| \lesssim |\widehat{v}|$ . As in the three-dimensional case we reduce to the estimates (\ref{4.a}) and (\ref{4.b}), which by Lemma \ref{Lemma1} for $b=\half+$ require the following conditions for (\ref{4.a}): $2r-s > \half$ and $r > \frac{1}{4}$ , and for (\ref{4.b}): $2r-s > \half$ , which are satisfied.
\end{proof}

\begin{proof}[Proof of (\ref{4.3})]
We use the assumption $l > \half$ and obtain the following analogue of (\ref{20}):
\begin{align*}
\nonumber
\Big|\int v_1 v_2 v_3 dx dt \Big| & \lesssim \|v_1\|_{L^2_x L^{\infty}_t} \|v_2\|_{L^{\infty}_x L^2_t} \|v_3\|_{L^2_x L^2_t} \\
&\lesssim \|v_1\|_{X^{0,\frac{1}{2}+}_{\tau=0}} \|v_2\|_{X^{\half+,\frac{1}{2}+}_{|\tau|=|\xi|}} 
\|v_3\|_{X^{0,0}_{|\tau|=|\xi|}} \, ,
\end{align*}
where we used (\ref{3.2}) for the second factor. The rest of the proof is easily reduced to the estimate
\begin{align*}
\sup_{T \in{\mathbb N}} \| \chi_{|\xi|=T+O(1)} \ast \langle \xi \rangle^{-2l}\|_{L^{\infty}(\mathbb{R}^2)}  \lesssim 1 \, ,
\end{align*}
which follows by an elementary calculation for $l>\half$ .
\end{proof}

\begin{proof} [Proof of (\ref{4.2'})] We replace the singular operator $|\nabla|^{-1+\epsilon}$ by $\langle \nabla \rangle^{-1+\epsilon}$ for $\epsilon > 0$ in two space dimensions by \cite{T1}, Cor. 8.2. Arguing as in the three-dimensional case and assuming $s > 0$ and $s-l \ge -\half$  we reduce to
\begin{align*}
\Big|\int v_1 v_2 v_3 dx dt \Big| & \lesssim \|v_1\|_{L^{2+}_x L^{2+}_t} \|v_2\|_{L^{\infty-}_x L^2_t} \|v_3\|_{L^2_x L^{\infty -}_t} \\
&\lesssim \|v_1\|_{X^{0+,0+}_{|\tau|=|\xi|}} \|v_2\|_{X^{\frac{1}{2},\half+}_{|\tau|=|\xi|}} 
\|v_3\|_{X^{0,\frac{1}{2}-}_{\tau =0}} \, ,
\end{align*}
where we used (\ref{3.3}) for the first two factors and Sobolev's embedding theorem for the last one.
\end{proof}

\end{document}